\newcommand{\mylabel}[2]{#2\def\@currentlabel{#2}\label{#1}}
\newcounter{mnotecount}[section]
\newcommand{\rmnote}[1]{}
\DeclareFontFamily{U}{mathb}{\hyphenchar\font45}
\DeclareFontShape{U}{mathb}{m}{n}{
      <5> <6> <7> <8> <9> <10> gen * mathb
      <10.95> mathb10 <12> <14.4> <17.28> <20.74> <24.88> mathb12
      }{}
\DeclareSymbolFont{mathb}{U}{mathb}{m}{n}
\let\dot\relax
\DeclareMathAccent{\dot}{0}{mathb}{"39}
\let\ddot\relax
\DeclareMathAccent{\ddot}{0}{mathb}{"3A}
\let\dddot\relax
\DeclareMathAccent{\dddot}{0}{mathb}{"3B}
\let\ddddot\relax
\DeclareMathAccent{\ddddot}{0}{mathb}{"3C}
\theoremstyle{plain}
\newtheorem*{theorem*}{Theorem}
\newtheorem{theorem}{Theorem}
\newtheorem*{lemma*}{Lemma}
\newtheorem{lemma}[theorem]{Lemma}
\newtheorem*{proposition*}{Proposition}
\newtheorem{proposition}[theorem]{Proposition}
\newtheorem*{corollary*}{Corollary}
\newtheorem*{claim*}{Claim}
\newtheorem*{conjecture*}{Conjecture}
\newtheorem*{question*}{Question}
\theoremstyle{definition}
\newtheorem*{definition*}{Definition}
\newtheorem{definition}[theorem]{Definition}
\newtheorem*{example*}{Example}
\newtheorem{example}[theorem]{Example}
\newtheorem*{algorithm*}{Algorithm}
\newtheorem*{remark*}{Remark}
\newtheorem*{remarks*}{Remarks}
\newtheorem{remark}[theorem]{Remark}
\newtheorem*{convention*}{Convention}
\numberwithin{equation}{section}
\newcommand{\al}{\alpha}
\newcommand{\be}{\beta}
\newcommand{\ga}{\gamma}
\newcommand{\de}{\delta}
\newcommand{\ep}{\epsilon}
\newcommand{\ka}{\kappa}
\newcommand{\rh}{\rho}
\newcommand{\si}{\sigma}
\newcommand{\ta}{\tau}
\newcommand{\vh}{\varphi}
\newcommand{\ps}{\psi}
\newcommand{\om}{\omega}
\newcommand{\N}{\mathbb{N}}
\newcommand{\R}{\mathbb{R}}
\newcommand{\cB}{\mathcal{B}}
\newcommand{\cE}{\mathcal{E}}
\newcommand{\fS}{\mathfrak{S}}
\newcommand{\fW}{\mathfrak{W}}
\newcommand{\p}{\partial}
\renewcommand{\o}{\circ}
\newcommand{\on}{\operatorname}
\newcommand{\sr}[1]%
{\ifmmode{}^\dagger\else${}^\dagger$\fi\ifvmode
\vbox to 0pt{\vss
 \hbox to 0pt{\hskip\hsize\hskip1em
 \vbox{\hsize3cm\raggedright\pretolerance10000
 \noindent #1\hfill}\hss}\vss}\else
 \vadjust{\vbox to0pt{\vss%
 \hbox to 0pt{\hskip\hsize\hskip1em%
 \vbox{\hsize3cm\raggedright\pretolerance10000%
 \noindent #1\hfill}\hss}\vss}}\fi%
}
\newcommand{\ol}{\overline}
\title[On the extension of Whitney ultrajets of Beurling type]
{On the extension of Whitney ultrajets of Beurling type}
\author[A.~Rainer]{Armin Rainer}
 \address{A.~Rainer: 
 Fakult\"at f\"ur Mathematik, Universit\"at Wien, 
 Oskar-Morgenstern-Platz~1, A-1090 Wien, Austria}
 \email{armin.rainer@univie.ac.at}
\begin{document}

\begin{abstract}
	We prove a version of Whitney's extension theorem in the ultradifferentiable Beurling setting 
	with controlled loss of regularity.  
	As a by-product we show the existence of continuous linear extension operators on certain 
	spaces of Whitney ultrajets on arbitrary closed sets in $\R^n$.  
\end{abstract}

\thanks{Supported by the Austrian Science Fund (FWF) Project P 32905-N}
\keywords{Whitney extension theorem in the ultradifferentiable setting, Beurling type classes, controlled loss of regularity, 
properties of weight functions}
\subjclass[2020]{
26E10, 
30D60, 
46E10, 
58C25  
}
\date{\today}

\maketitle

\section{Introduction}

Ultradifferentiable versions of Whitney's extension theorem \cite{Whitney34a} 
seek a precise determination of how growth rates of the Whitney jets 
on closed subsets of $\R^n$ are preserved by their extensions to $\R^n$. 
There are different ways of defining these growth rates. 
In this paper they are measured in terms of a weight function $\om$ in the 
framework of so-called Braun--Meise--Taylor classes. 
These classes of ultradifferentiable functions were introduced by Beurling 
\cite{Beurling61} and Bj\"orck \cite{Bjoerck66}. 
We shall work with the reformulation (and generalization) due to 
Braun, Meise, and Taylor \cite{BMT90}.
With each weight function $\om$ two types of Braun--Meise--Taylor classes 
of functions on $\R^n$ can be 
associated: the Beurling type $\cE^{(\om)}(\R^n)$ and the Roumieu type 
$\cE^{\{\om\}}(\R^n)$. 
Similarly, one has for any closed subset $A \subseteq \R^n$ 
the classes $\cE^{(\om)}(A)$ and $\cE^{\{\om\}}(A)$ of Whitney ultrajets on $A$. 
See \Cref{sec:spaces} for precise definitions.  

The jet mapping $j^\infty_A$ which sends a smooth function $f$ on $\R^n$ to the infinite jet $(f^{(\al)}|_A)_{\al}$
consisting of its partial derivatives of all orders restricted to $A$ induces the 
mappings $j^\infty_A : \cE^{(\om)}(\R^n) \to \cE^{(\om)}(A)$ and 
$j^\infty_A : \cE^{\{\om\}}(\R^n) \to \cE^{\{\om\}}(A)$ (by restriction).  
It is natural to ask under which conditions these mappings are surjective.
The answer was given by
Bonet, Braun, Meise, and Taylor \cite{BBMT91} (see also Abanin\cite{Abanin01}) who 
fully characterized those weight functions $\om$ which admit an extension theorem preserving the class: 
The following conditions are equivalent.
\begin{itemize}
	\item $j^\infty_A : \cE^{(\om)}(\R^n) \to \cE^{(\om)}(A)$ is surjective 
	for every closed $A \subseteq \R^n$.
	\item $j^\infty_A : \cE^{\{\om\}}(\R^n) \to \cE^{\{\om\}}(A)$ is surjective 
	for every closed $A \subseteq \R^n$.
	\item $\om$ is a \emph{strong} weight function, i.e.,
	 \begin{equation*} \label{strong}
	 	\exists C>0 ~ \forall t >0 : 
	 	\int_{1}^\infty \frac{\om(tu)}{u^2}\,du \le C\om(t) + C.
	 \end{equation*}
\end{itemize}
Many partial contributions by several authors led to this final answer.

For weight functions $\om$ that are not strong one is interested 
in characterizing weight functions $\si$ 
such that 
\[
	j^\infty_A(\cE^{(\om)}(\R^n)) \supseteq \cE^{(\si)}(A) 
	\quad \text{ and }  \quad
	j^\infty_A(\cE^{\{\om\}}(\R^n)) \supseteq \cE^{\{\si\}}(A),	
\]
since 
the extension involves an unavoidable loss of regularity. 
This question was initiated by Ehrenpreis \cite{Ehrenpreis70} and 
solved for the singleton $A=\{0\}$ by Bonet, Meise, and Taylor 
\cite{BonetMeiseTaylor92} and for 
compact convex sets $A$ with non-empty interior by Langenbruch \cite{Langenbruch94}. 
In our recent papers \cite{Rainer:2019ac,Rainer:2020aa} we solved this problem in 
the Roumieu case for all compact sets $A$; see \Cref{Roumieu} below. 
The purpose of this note is to prove a similar result in the Beurling case.

We want to add that the analogous problems for ultradifferentiable classes 
defined by weight sequences have been solved by Chaumat and Chollet 
\cite{ChaumatChollet94} 
for general $A$, by Langenbruch \cite{Langenbruch94}
for compact convex sets $A$ with non-empty interior, and 
by Petzsche \cite{Petzsche88} and Schmets and Valdivia \cite{SchmetsValdivia04} 
for the singleton.

\subsection{Results} Let us first recall 

\begin{theorem}[{\cite{Rainer:2019ac,Rainer:2020aa}}] \label{Roumieu}
Let $\om$ be a non-quasianalytic concave weight function. 
Let $\si$ be a weight function satisfying $\si(t) = o(t)$ as $t \to \infty$. 
Then we have $j^\infty_A(\cE^{\{\om\}}(\R^n)) \supseteq \cE^{\{\si\}}(A)$
for every closed $A \subseteq \R^n$ if and only if 
\begin{equation}
	\label{mixedstrong} \tag{S}
	 	\exists C>0 ~ \forall t >0 : 
	 	\int_{1}^\infty \frac{\om(tu)}{u^2}\,du \le C\si(t) + C.
\end{equation}
\end{theorem}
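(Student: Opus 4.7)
The proof naturally splits into necessity and sufficiency. For the necessity direction, it suffices to test the extension property on the singleton $A = \{0\}$. On a point the Whitney ultrajet space $\cE^{\{\si\}}(\{0\})$ reduces to sequences $(a_\al)_\al$ obeying a Roumieu-type bound
\[
  \exists h,C>0 : |a_\al| \le C\, h^{|\al|} \exp\bigl(\vh_\si^*(|\al|/h)\bigr),
\]
where $\vh_\si(t) = \si(e^t)$ and $\vh_\si^*$ is its Young conjugate; $\cE^{\{\om\}}(\R^n)$ evaluated at $0$ admits the analogous description in terms of $\om$. Thus surjectivity of $j^\infty_{\{0\}}$ is equivalent to a quantitative comparison of the two weights $\vh_\si^*$ and $\vh_\om^*$. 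The classical Bonet--Meise--Taylor characterization in the singleton case identifies this comparison precisely with the integral inequality \eqref{mixedstrong}, so necessity already follows from the one-point test.

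For sufficiency, I would adapt the classical Whitney extension scheme to the Braun--Meise--Taylor setting. Given $F = (F^\al)_\al \in \cE^{\{\si\}}(A)$, decompose $\R^n \setminus A$ into a Whitney cover by dyadic cubes $(Q_j)$ with $\on{diam}(Q_j) \sim \on{dist}(Q_j,A) =: d_j$ and select a subordinate partition of unity $(\chi_j)$ with $|\p^\ga \chi_j| \le C_\ga d_j^{-|\ga|}$. For each $j$ fix a nearest point $a_j \in A$ and approximate $F$ at $a_j$ by its Taylor polynomial $P_j$ of degree $N_j$, where $N_j$ is a free parameter tuned below. Then set
\[
  f(x) = \sum_j \chi_j(x)\, P_j(x) \quad (x \notin A), \qquad f|_A = F^0.
\]
The standard Whitney argument, unchanged from the $C^\infty$ setting, shows that $f \in C^\infty(\R^n)$ and $j^\infty_A f = F$; the ultradifferentiable content lies entirely in quantifying the derivatives of $f$ off $A$.

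Leibniz's rule and the Taylor remainder estimate for Whitney jets reduce the bound on $|\p^\be f(x)|$ to an expression involving $d_j^{N_j - |\be|}$, the cutoff factor $d_j^{-|\ga|}$, and the Roumieu factor $\exp\bigl(\vh_\si^*(N_j/h)\bigr)$ inherited from $F$. Optimising $N_j$ against the small quantity $d_j$ converts the Legendre transform of $\vh_\si$ into an integral of the form $\int_{1}^\infty \om(tu)/u^2\,du$ evaluated at $t \sim 1/d_j$, and here the hypothesis \eqref{mixedstrong} is precisely what lets one bound this integral by $\si(t)$ up to constants, thereby producing the $\om$-growth required by $\cE^{\{\om\}}(\R^n)$. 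Summing geometrically over the cubes at each distance from $A$, which is possible because Whitney's decomposition is locally finite, assembles the local estimates into the global bound.

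The main technical obstacle is this optimisation together with the careful bookkeeping of the Roumieu quantifier on $h$: one must choose $N_j$ as a function of $d_j$ and $|\be|$ so that both the truncation error from $F$ and the derivative bound on $f$ close up with a common $h'$ on the right-hand side, and all losses must be absorbed by \eqref{mixedstrong}. Ancillary ingredients each play a fixed role: concavity of $\om$ ensures that $\vh_\om^*$ has the convexity properties needed for the Legendre manipulations, non-quasianalyticity supplies ultradifferentiable cutoffs in $\cE^{\{\om\}}$, and the hypothesis $\si(t) = o(t)$ allows $\si$ to be meaningfully compared with the linear scale that governs the integrand. Once these ingredients are in place, \eqref{mixedstrong} functions as the single analytic master identity that keeps the scheme consistent for arbitrary closed $A$.
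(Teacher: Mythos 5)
First, a point of reference: the present paper does not prove \Cref{Roumieu} at all --- it is imported verbatim from \cite{Rainer:2019ac,Rainer:2020aa} --- so your attempt has to be measured against those papers rather than against anything in this text. Your overall architecture is the right skeleton: necessity by restricting to the singleton and invoking the Borel-map characterization of \cite{BonetMeiseTaylor92}, sufficiency by a Whitney-type construction over a dyadic decomposition of $\R^n\setminus A$ with partition of unity and Taylor approximations of variable degree. The necessity half is essentially complete as you state it (modulo checking that the singleton result of \cite{BonetMeiseTaylor92} is available in the Roumieu form with exactly condition \eqref{mixedstrong}).

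The gap is in the sufficiency half, and it sits exactly where you wave your hands: the assertion that ``optimising $N_j$ against $d_j$ converts the Legendre transform of $\vh_\si$ into the integral $\int_1^\infty \om(tu)u^{-2}\,du$'' is not a computation that can be carried out directly at the level of weight functions, and the Roumieu quantifier over $h$ does not close by this route. What the cited proofs actually do is: (i) replace $\cE^{\{\om\}}$ and $\cE^{\{\si\}}$ by their associated weight matrices $\{W^x\}_{x>0}$ and $\{S^y\}_{y>0}$ (as recalled in \Cref{sec:operator}), so that a jet of class $\cE^{\{\si\}}$ on a compact set lies in a single Banach space $\cE^{S^y}_a(K)$; (ii) translate \eqref{mixedstrong} into a mixed strong non-quasianalyticity condition between individual weight sequences of the two matrices, i.e.\ an estimate of the discrete tails $\sum_{k\ge j} M_k/((k+1)M_{k+1})$, which is the true avatar of the integral in the construction; and (iii) run a Chaumat--Chollet-type extension \cite{ChaumatChollet94} at the level of these sequences, where the approximation degree on each Whitney cube is dictated by the function $h_M(t)=\inf_k M_k t^k$ rather than by a free optimization. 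Step (ii) is where \eqref{mixedstrong} is genuinely consumed, and it is also where concavity of $\om$ enters --- through subadditivity $\om(s+t)\le\om(s)+\om(t)$ and the resulting stability of the weight matrix --- not, as you suggest, to make $\vh^*$ convex (Young conjugates are always convex). Without this reduction your scheme yields, for each multi-index $\be$, an $\om$-type bound with a parameter depending on $\be$, and you cannot extract the single $h'$ valid for all derivatives that membership in $\cE^{\{\om\}}(\R^n)$ requires.
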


Our goal is to prove a version of \Cref{Roumieu} in the Beurling case. 
We follow the standard strategy of reducing the Beurling to the Roumieu case. 
This means roughly speaking that for a pair $(\om,\si)$ of suitable weight functions  
and for a Whitney ultrajet $F$ of class $\cE^{(\si)}$ one tries to find a related pair 
$(\tilde \om,\tilde \si)$ of weight functions which satisfy the assumptions of 
\Cref{Roumieu} and such that the jet $F$ is also of Roumieu class $\cE^{\{\tilde \si\}}$. 
Then by \Cref{Roumieu} there exists an $\cE^{\{\tilde \om\}}$-extension to $\R^n$ which also is of class 
$\cE^{(\om)}$ provided that $\om(t) = o(\tilde \om(t))$ as $t \to \infty$.

We could not preserve the natural condition \eqref{mixedstrong} in the reduction procedure. 
Instead we prove an extension theorem of Beurling type under a slightly stronger condition.   
The following is our main result:

\begin{theorem} \label{Beurling}
Let $\om$ be a non-quasianalytic concave weight function. 
Let $\si$ be a weight function satisfying $\si(t) = o(t)$ as $t \to \infty$. 
Suppose that there exists $r \in (0,1)$ such that 
\begin{equation}
	\tag{S$_{r}$}
	\label{integralcondition}
	~\exists  C>0 ~\forall t >0 : 
	\int_{1}^\infty \frac{\om(tu)}{u^{1+r}}\,du \le C\si(t) + C.	
\end{equation}
Then for every closed $A \subseteq \R^n$ 
we have $j^\infty_A(\cE^{(\om)}(\R^n)) \supseteq \cE^{(\si)}(A)$. 
\end{theorem}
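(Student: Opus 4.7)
The plan is to deduce the theorem from \Cref{Roumieu}. Given $F\in\cE^{(\si)}(A)$, one aims to construct auxiliary weight functions $\tilde\om$ and $\tilde\si$ with three properties: (a) $F\in\cE^{\{\tilde\si\}}(A)$; (b) the pair $(\tilde\om,\tilde\si)$ satisfies the hypotheses of \Cref{Roumieu}, in particular \eqref{mixedstrong} and $\tilde\si(t)=o(t)$; and (c) $\om(t)=o(\tilde\om(t))$ as $t\to\infty$, which guarantees $\cE^{\{\tilde\om\}}(\R^n)\subseteq\cE^{(\om)}(\R^n)$. Once (a)--(c) are in hand, \Cref{Roumieu} produces an extension $\tilde f\in\cE^{\{\tilde\om\}}(\R^n)$ of $F$, which by (c) lies in $\cE^{(\om)}(\R^n)$, and the proof is complete.

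The natural ansatz is multiplicative: $\tilde\om(t):=\om(t)\psi(t)$ and $\tilde\si(t):=(\si(t)+1)\psi(t)$, for an increasing ``slack'' factor $\psi$ with $\psi(t)\to\infty$. Since $\tilde\si\ge\si$, property (a) is immediate from the trivial inclusion $\cE^{(\si)}(A)\subseteq\cE^{\{\si\}}(A)\subseteq\cE^{\{\tilde\si\}}(A)$, and (c) follows at once from $\psi\to\infty$. The heart of the argument is the verification of \eqref{mixedstrong}: if $\psi$ obeys the growth rule
\[
\psi(tu)\le C\psi(t)u^{1-r},\qquad t,u\ge 1,
\]
then \eqref{integralcondition} gives
\[
\int_1^\infty \frac{\tilde\om(tu)}{u^2}\,du = \int_1^\infty \frac{\om(tu)\psi(tu)}{u^2}\,du \le C\psi(t)\int_1^\infty \frac{\om(tu)}{u^{1+r}}\,du \le C'\psi(t)(\si(t)+1) = C'\tilde\si(t).
\]
The slack $1-r>0$ between the exponents $1+r$ in \eqref{integralcondition} and $2$ in \eqref{mixedstrong}, supplied precisely by the strict inequality $r<1$, is what makes this step possible; a prototype choice is $\psi(t)=t^\delta$ with $\delta\in(0,1-r)$.

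The main obstacle lies in constructing $\psi$ so that $\tilde\om$ and $\tilde\si$ are bona fide weight functions in the BMT sense (concave, sub-additive, non-quasianalytic, with the proper normalization) together with $\tilde\si(t)=o(t)$. The last condition forces $\psi(t)=o(t/\si(t))$. Since only $\si(t)=o(t)$ is given, the gap $t/\si(t)$ may grow arbitrarily slowly (for instance when $\si(t)=t/\log t$), so the prototype $\psi(t)=t^\delta$ is not admissible in general; instead $\psi$ must be tailored to the growth of $\si$, for example as a concave regularization of a function built from $t^\delta$ truncated against $t/(\si(t)\log t)$ or from iterated logarithms of $t/\si(t)$. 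Once a concave $\psi$ with $\psi\to\infty$, $\psi(tu)\le C\psi(t)u^{1-r}$, and $\si\cdot\psi=o(t)$ is constructed, the remaining weight-function axioms for $\tilde\om,\tilde\si$ follow routinely from those of $\om,\si,\psi$, and \Cref{Roumieu} concludes the proof.
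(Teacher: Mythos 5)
Your overall strategy (reduce to \Cref{Roumieu} via auxiliary weights $\tilde\om,\tilde\si$ with $\om=o(\tilde\om)$) is the right one and matches the paper, and your observation that the slack $1-r>0$ is what allows an unbounded factor to be absorbed while passing from \eqref{integralcondition} to \eqref{mixedstrong} is essentially the mechanism of the paper's Reduction Lemma~\ref{lemma2}. However, step (a) contains a genuine and fatal gap: the inclusion $\cE^{\{\si\}}(A)\subseteq\cE^{\{\tilde\si\}}(A)$ for $\tilde\si\ge\si$ is backwards. In the Braun--Meise--Taylor scale a \emph{larger} weight function defines a \emph{smaller} class ($\cE^{\{\om\}}\subseteq\cE^{\{\si\}}$ iff $\si=O(\om)$, and $\cE^{\{\om\}}\subseteq\cE^{(\si)}$ iff $\si=o(\om)$). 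With your choice $\tilde\si=(\si+1)\psi$ and $\psi\to\infty$ you have $\si=o(\tilde\si)$, hence $\cE^{\{\tilde\si\}}(A)\subseteq\cE^{(\si)}(A)$ --- exactly the reverse of what you need. A generic $F\in\cE^{(\si)}(A)$ does \emph{not} lie in $\cE^{\{\tilde\si\}}(A)$ for any fixed $\tilde\si$ with $\si=o(\tilde\si)$, so no single pair $(\tilde\om,\tilde\si)$ can serve all jets at once. (Nor can you escape by taking $\tilde\si=O(\si)$ instead: then \eqref{mixedstrong} for $(\tilde\om,\tilde\si)$ with $\tilde\om=\om\psi$, $\psi\to\infty$, would force $\psi$ to be bounded, as your own displayed estimate shows.)

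The missing idea is that $\tilde\si$ must be adapted to the \emph{individual} jet $F$. This is where the paper spends its effort: from the seminorm sequence of $F$ one builds, via the convexity lemma of Bonet--Braun--Meise--Taylor (\Cref{lem:BBMT43}), a function $f$ with $\si(t)=o(f(t))$ that majorizes the growth of $F$ in the appropriate Legendre-conjugate sense; the Reduction Lemma~\ref{lemma2} then produces $\tilde\si$ squeezed \emph{between} $\si$ and $f$ (so that $\si=o(\tilde\si)$ \emph{and} $\tilde\si=o(f)$, the latter being what puts the specific $F$ into $\cE^{\{\tilde\si\}}(K)$), together with a concave $\tilde\om$ with $\om=o(\tilde\om)$ and $(\tilde\om,\tilde\si)$ still $r$-strong. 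Your multiplicative ansatz could conceivably be reshaped into an alternative proof of that Reduction Lemma if you let $\psi$ depend on $F$ through $f$ and grow slowly enough that $(\si+1)\psi=o(f)$; but as written, with (a) declared ``immediate,'' the proposal omits the central difficulty of the Beurling case and does not constitute a proof.
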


We will see in \Cref{prop:integralcondition} that the existence of $r \in (0,1)$ such that \eqref{integralcondition} holds is equivalent to a 
condition (namely \eqref{strongercondition} below) which is suited for the reduction (see \Cref{lemma2}) 
in the sense that it can be transfered from $(\om,\si)$ to $(\tilde \om,\tilde \si)$. 

\begin{definition}
	We will say that an (ordered) pair of weight functions $(\om,\si)$ is \emph{$r$-strong} if
	\eqref{integralcondition} holds. 
	The pair $(\om,\si)$ is simply called \emph{strong} if it is $1$-strong. Note that 
	the pair $(\om,\om)$ is strong if and only if $\om$ is a strong weight function.
\end{definition}

Note that the condition \eqref{integralcondition} appears naturally in the framework of ultraholomorphic extension theorems
(see \cite{Jimenez-Garrido:2020aa} and \Cref{rem:ultraholomorphic}).

If the weight functions $\om$ and $\si$ are equivalent (i.e., 
generate the same function spaces), 
then the pair $(\om,\si)$ is strong if and only if it is $r$-strong for some $r \in (0,1)$ 
and, additionally, the weight function $\om$ is strong (see \Cref{lemma3}). 
In that case we recover the result stated in the introduction, since any strong weight function is 
equivalent to a concave one and always satisfies $\om(t) = o(t)$ as $t \to \infty$.
In general the condition \eqref{integralcondition} for some $r \in (0,1)$ is strictly stronger than 
\eqref{mixedstrong} (see \Cref{example}). 
Moreover, for the singleton $K = \{0\}$, by \cite{BonetMeiseTaylor92}, 
and for compact convex sets 
$K \subseteq \R^n$ with non-empty interior, by \cite{Langenbruch94}, 
the pair $(\om,\si)$ being strong is equivalent to the inclusion  
$j^\infty_K(\cE^{(\om)}(\R^n)) \supseteq \cE^{(\si)}(K)$. 
So it remains an open question if
in \Cref{Beurling} the condition 
\eqref{integralcondition} can be replaced by \eqref{mixedstrong}.

\begin{remark} \label{remark}
  (1)
  In \Cref{Roumieu} and \Cref{Beurling} we may assume that the ultradifferentiable 
  extension to $\R^n$ of any ultrajet $F$ on $A$ is real analytic 
  on $\R^n \setminus A$.  
  This follows from a result of Schmets and Validivia \cite{Schmets:1999aa}. It can also be seen easily by 
  adapting the proof of Langenbruch \cite[Theorem 13]{Langenbruch03} which is based on a  general 
  approximation theorem of Whitney type developed in the same paper. 

  (2) 
  Both \Cref{Roumieu}(sufficiency of \eqref{mixedstrong}) and \Cref{Beurling} follow from the 
  respective results for compact sets $K \subseteq \R^n$, since both $\cE^{\{\om\}}$ and 
  $\cE^{(\om)}$ admit partitions of unity if $\om$ is non-quasianalytic. 
  For the sake of completeness we give a short argument:
  Let $A \subseteq \R^n$ be closed and fix $F=(F^\al) \in \cE^{[\om]}(A)$; here $\cE^{[\om]}$
  stands for $\cE^{\{\om\}}$ or $\cE^{(\om)}$. 
  For $k \in \N_{\ge 1}$ consider the open annuli $U_k := \{x \in \R^n :  k-1 < |x| < k+1 \}$ and set 
  $U_0 := \{x \in \R^n : |x|<1\}$. 
  There exist functions $\vh_k \in \cE^{[\om]}(\R^n)$ such that $0 \le \vh_k \le 1$, $\on{supp} \vh_k \subseteq U_k$, and 
  $\sum_{k=1}^\infty \vh_k =1$. 
  For each $k \in \N_{\ge 1}$ the jet $F_k := (F^\al|_{\ol U_k})$ belongs to 
  $\cE^{[\om]}(A\cap \ol U_k)$.
  So there exists $f_k \in \cE^{[\om]}(\R^n)$ such that 
  $j^\infty_{A\cap \ol U_k}(f_k) = F_k$. 
  Then
  $f := \sum_{k=1}^\infty \vh_k f_k$
  is a function in $\cE^{[\om]}(\R^n)$ since on any compact set the sum is finite. 
  Let $x \in A$ be fixed. Let $\ell$ be the unique integer such that $\ell-1 \le |x| < \ell$.
  Then $\vh_k =0$ near $x$ unless $k \in \{\ell,\ell+1\}$. 
  For each $\al \in \N^n$  
  \begin{align*}
    f^{(\al)}(x) &= \p^\al \sum_{k=1}^\infty \vh_k(x) f_k(x) 
    \\&= \sum_{\be \le \al} \binom{\al}{\be}\vh^{(\be)}_\ell(x) f^{(\al-\be)}_\ell(x) 
    + \sum_{\be \le \al} \binom{\al}{\be}\vh^{(\be)}_{\ell+1}(x) f^{(\al-\be)}_{\ell+1}(x)
    \\&= \sum_{\be \le \al} \binom{\al}{\be}\p^\be(\vh_\ell(x) + \vh_{\ell+1}(x)) 
     F^{\al-\be}(x) 
    \\&=  F^\al(x), 
  \end{align*}
  since all summands with $|\be|>0$ vanish. 

  (3) In the case that $K \subseteq \R^n$ is a compact set, for any Whitney 
  ultrajet on $K$ the ultradifferentiable extension can be assumed to have compact support 
  by the existence of suitable cut-off functions. In particular, the ultradifferentiable growth
  estimates are global. 
  In \cite{RainerSchindl16a,Rainer:2019ac,Rainer:2020aa} we did justice to this circumstance by 
  writing $\cB^{[\om]}(K)$ for the space of Whitney ultrajets on $K$ and 
  $\cB^{[\om]}(\R^n)$   
  for the space of ultradifferentiable functions on $\R^n$ of \emph{global} class $\cB^{[\om]}$ (cf.\ \Cref{sec:spaces}). 
  Clearly, $\cB^{[\om]}(K) = \cE^{[\om]}(K)$ and $\cB^{[\om]}(\R^n) \subsetneq \cE^{[\om]}(\R^n)$. 

  (4) In \Cref{Roumieu}, \Cref{Beurling}, and also in \Cref{operator} below 
  one could remove the assumption that $\om$ is non-quasianalytic, since the conditions 
  \eqref{mixedstrong}, \eqref{integralcondition} for some $r \in (0,1)$, 
  and $j^\infty_A(\cE^{[\om]}(\R^n)) \supseteq \cE^{[\si]}(A)$ all imply 
  non-quasianalyticity of $\om$. This is obvious for \eqref{mixedstrong} and hence also for \eqref{integralcondition} with $r \in (0,1)$, by 
  \Cref{lem:stronger}. For the third condition 
  it follows from \cite{RainerSchindl15}.   
\end{remark}

As a by-product of the proofs of the \Cref{Roumieu,Beurling} we show in \Cref{operator} 
that, 
for $r$-strong pairs $(\om,\si)$, for $r \in (0,1)$, of weight functions,
there exists a continuous linear extension operator on suitable subspaces of $\cE^{(\si)}(A)$ with values in $\cE^{(\om)}(\R^n)$,
for every non-empty closed 
subset $A \subseteq \R^n$. 
By a \emph{continuous linear extension operator} on a closed set $A \subseteq \R^n$ 
we mean any continuous linear right-inverse of $j^\infty_A$ with suitable domain and codomain. 

\begin{theorem} \label{operator}
Let $\om$ be a non-quasianalytic concave weight function.
Let $\si$ be a weight function satisfying $\si(t)=o(t)$ as $t\to \infty$.
Assume that $(\om,\si)$ is $r$-strong for some $r \in (0,1)$.
If $\ta$ is a weight function such that $\si(t) = o(\ta(t))$ as $t\to \infty$, then for each non-empty closed subset $A \subseteq \R^n$ 
there exists a continuous linear 
extension operator $T_A : \cE^{\{\ta\}}(A) \to \cE^{(\om)}(\R^n)$. 
\end{theorem}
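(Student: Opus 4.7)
The plan is to revisit the reduction strategy sketched in the introduction and to extract from it an explicit \emph{linear} dependence on the input jet. In a direct attack on \Cref{Beurling}, one would produce for each jet $F$ auxiliary Roumieu weights $\tilde\si$ and $\tilde\om$ such that $F\in\cE^{\{\tilde\si\}}(A)$, the pair $(\tilde\om,\tilde\si)$ is strong, and $\om(t)=o(\tilde\om(t))$; \Cref{Roumieu} then yields an extension in $\cE^{\{\tilde\om\}}(\R^n)\subseteq\cE^{(\om)}(\R^n)$. If $\tilde\si$ depends on $F$, linearity is destroyed. The extra hypothesis $\si(t)=o(\ta(t))$ allows us to fix the auxiliary data uniformly: one may choose a weight $\tilde\si$ with $\si(t)=o(\tilde\si(t))$ and $\tilde\si(t)=o(\ta(t))$, together with a companion $\tilde\om$ so that $\om(t)=o(\tilde\om(t))$ and $(\tilde\om,\tilde\si)$ remains $r'$-strong for some $r'\in(0,1)$. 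Then every $F\in\cE^{\{\ta\}}(A)$ automatically lies in $\cE^{\{\tilde\si\}}(A)$, and the embedding $\cE^{\{\ta\}}(A)\hookrightarrow\cE^{\{\tilde\si\}}(A)$ is continuous.

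I would first reduce to the case of compact $A=K$. The partition of unity argument in \Cref{remark}(2) uses geometric data (annuli $U_k$, cutoffs $\vh_k$) chosen independently of $F$. Hence, given continuous linear operators $T_k:\cE^{\{\ta\}}(A\cap\ol U_k)\to\cE^{(\om)}(\R^n)$ on each annular piece, the formula
\[
T_A(F) := \sum_{k\ge 1} \vh_k\cdot T_k(F|_{A\cap\ol U_k})
\]
defines a continuous linear right-inverse of $j^\infty_A$: the sum is locally finite, the calculation in \Cref{remark}(2) shows that $j^\infty_A\circ T_A = \on{id}$, and continuity follows from the continuity of each $T_k$ together with continuity of multiplication by $\vh_k\in\cE^{(\om)}(\R^n)$.

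For compact $K$, the strategy is to inspect the sufficiency proof of \Cref{Roumieu} in \cite{Rainer:2019ac,Rainer:2020aa}. That proof constructs the extension as a Whitney-type series
\[
E(F)(x) = \sum_{Q\in\mathcal W} \vh_Q(x)\, T^{d_Q}_{x_Q}F(x) \quad\text{on}\quad \R^n\setminus K,
\]
where $\mathcal W$ is a Whitney cube decomposition of $\R^n\setminus K$, $\vh_Q$ is an ultradifferentiable cutoff subordinate to a dilate of $Q$, $x_Q\in K$ is a nearest point, and $T^{d_Q}_{x_Q}F$ is the Taylor polynomial of $F$ at $x_Q$ of an order $d_Q$ determined by the distance from $Q$ to $K$. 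These data depend only on $K$ and on the fixed weight pair, so the assignment $F\mapsto E(F)$ is manifestly linear. The main obstacle is to extract from the quantitative estimates in loc.~cit.\ uniform bounds showing that each seminorm of $E(F)$ in $\cE^{(\om)}(\R^n)$ is controlled by a \emph{single} Roumieu seminorm of $F$ with respect to the \emph{fixed} weight $\tilde\si$; that uniformity is precisely what the $F$-independent choice of $\tilde\si$ enabled by $\si=o(\ta)$ provides. Composing with the continuous inclusion $\cE^{\{\ta\}}(A)\hookrightarrow\cE^{\{\tilde\si\}}(A)$ then delivers the desired continuous linear extension operator $T_A$.
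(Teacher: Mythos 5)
Your proposal follows essentially the same route as the paper: apply the Reduction Lemma~\ref{lemma2} with $f=\ta$ to fix the auxiliary weights $(\tilde\om,\tilde\si)$ once and for all (independently of $F$), note that $\cE^{\{\ta\}}(K)$ then embeds continuously into a single Banach step of the $\tilde\si$-scale on which the Roumieu extension of \cite{Rainer:2019ac,Rainer:2020aa} is already known to act as a continuous linear operator into a Banach step of the $\tilde\om$-scale, and glue over an arbitrary closed set with the partition of unity of \Cref{remark}(2). The only difference is presentational: where you propose to re-extract the uniform linear bounds from the Roumieu construction, the paper simply invokes the weight-matrix description of the spaces and the already-isolated operator statement from \cite[Remark 5.6]{Rainer:2019ac}.
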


A similar result has been obtained by Chaumat and Chollet \cite[Theorem 31]{ChaumatChollet94} in the setting of Denjoy--Carleman classes. 

In the case that $\om$ is a strong weight function it is well-understood when a continuous linear extension operator 
$\cE^{(\om)}(A) \to \cE^{(\om)}(\R^n)$ exists. 
If $A$ is the closure of a bounded domain with real analytic boundary, it always exists, 
and for the singleton $A=\{0\}$ it exists if and only if 
\[
    \forall C>0 ~\exists \de>0 ~\exists R_0\ge 1 ~\forall R\ge R_0: \om^{-1}(CR) \om^{-1}(\de R) \le \om^{-1}(R)^2;
\]
by Meise and Taylor \cite{MR987760}.
Franken \cite{MR1251459} proved that under this additional condition every closed set $A\subseteq \R^n$ has a 
continuous linear extension operator 
$\cE^{(\om)}(A) \to \cE^{(\om)}(\R^n)$. In the Roumieu case extension operators $\cE^{\{\om\}}(A) \to \cE^{\{\om\}}(\R^n)$ do usually not exist; 
see Langenbruch \cite{MR964961}.

The proof of \Cref{Beurling} is given in \Cref{sec:proofBeurling}; it is based on the Reduction lemma~\ref{lemma2} 
presented in \Cref{sec:reduction}.  \Cref{operator} is proved in \Cref{sec:operator}.

\subsection*{Acknowledgement} 
The author wishes to thank David Nenning and Gerhard Schindl for helpful discussions and reading a 
preliminary version of the paper.

\section{Spaces of ultradifferentiable functions and jets} \label{sec:spaces}

\subsection{Weight functions} \label{sec:weights}
A \emph{weight function} is any continuous increasing function $\om : [0,\infty) \to [0,\infty)$ with $\om(0) =0$ that satisfies
\begin{align}
   & \om(2t) = O(\om(t)) \quad\text{ as } t \to \infty, \label{defom1}\\
   & \log t = o(\om(t)) \quad\text{ as } t \to \infty, \label{defom3}\\
   & \vh(t) := \om(e^t) \text{ is convex}.  \label{defom4}
\end{align} 
A weight function is called \emph{non-quasianalytic} if
\begin{equation}
   \int_1^\infty \frac{\om(t)}{t^2} \, dt <\infty.
 \end{equation}
Two weight functions $\om$ and $\si$ are said to be \emph{equivalent} if $\om(t) = O(\si(t))$ and $\si(t) = O(\om(t))$ 
as $t \to \infty$. 
For each weight function $\om$ there is an equivalent weight function 
$\tilde \om$ such that $\om(t) = \tilde \om(t)$ for large $t>0$ 
and $\tilde \om |_{[0,1]} =0$. It is thus no restriction to assume that $\om |_{[0,1]} =0$ when necessary.

The \emph{Young conjugate} $\vh^*$ of $\vh$ is defined by  
\[
  \vh^*(t) := \sup_{s\ge 0} \big(st-\vh(s)\big), \quad t \ge 0.
\]
Assuming $\om |_{[0,1]} =0$, we have that    
$\vh^*$ is a convex increasing function satisfying $\vh^*(0)=0$, $t/\vh^*(t) \to 0$ as $t \to \infty$, and $\vh^{**}=\vh$; 
cf.\ \cite{BMT90} and \cite[Remark 1.2]{BBMT91}.

\subsection{Function spaces}
Let $\om$ be a weight function, $U \subseteq \R^n$ open, and $\rh >0$.  
We consider the Banach space 
$\cE^{\om}_\rh(U) := \{f \in C^\infty (\R^n) : \|f\|^\om_{U,\rh}< \infty\}$, 
where  
\[
  \|f\|^\om_{U,\rh} := \sup_{x \in U,\,\al \in \N^n} |\p^\al f(x)| \exp(-\tfrac{1}{\rh} \vh^*(\rh |\al|)),
\]
and the locally convex spaces
\begin{align*}
	\cE^{(\om)}(\R^n) 
	&:= \on{proj}_{U \Subset \R^n} \on{proj}_{k \in \N_{\ge 1}} \cE^{\om}_{1/k}(U),
	\\
	\cE^{\{\om\}}(\R^n) 
	&:= \on{proj}_{U \Subset \R^n} \on{ind}_{k \in \N_{\ge 1}} \cE^{\om}_k(U).
\end{align*}
Then $\cE^{(\om)}(\R^n)$ and $\cE^{\{\om\}}(\R^n)$ are called 
\emph{Braun--Meise--Taylor classes} 
of \emph{Beurling type} and of \emph{Roumieu type}, respectively.

Let $\om$ and $\si$ be weight functions. We have the following inclusion relations:
\begin{align*}
	\cE^{(\om)} (\R^n) \subseteq \cE^{(\si)} (\R^n) 
	\quad&\Leftrightarrow\quad
	\si(t) = O(\om(t)) \text{ as } t \to \infty,
	\\
	\cE^{\{\om\}} (\R^n) \subseteq \cE^{\{\si\}} (\R^n) 
	\quad&\Leftrightarrow\quad
	\si(t) = O(\om(t)) \text{ as } t \to \infty,
	\\
	\cE^{\{\om\}} (\R^n) \subseteq \cE^{(\si)} (\R^n) 
	\quad&\Leftrightarrow\quad
	\si(t) = o(\om(t)) \text{ as } t \to \infty,
\end{align*}
cf.\ \cite[Corollary 5.17]{RainerSchindl12}; in particular, $\om$ and $\si$ are equivalent if and only if 
$\cE^{(\om)}(\R^n) = \cE^{(\si)}(\R^n)$, respectively 
$\cE^{\{\om\}}(\R^n) = \cE^{\{\si\}}(\R^n)$.  
The spaces $\cE^{(\om)}(\R^n) \subseteq \cE^{\{\om\}}(\R^n)$ contain non-trivial functions with compact support if and only if $\om$ is 
non-quasianalytic (cf.\ \cite{BMT90} or \cite{RainerSchindl12}). 
Since $C^\om(\R^n) = \cE^{\{t\}}(\R^n)$, the space of real analytic function on $\R^n$ is contained in $\cE^{\{\om\}}(\R^n)$ 
if and only if $\om(t) = O(t)$ as $t \to \infty$, and it is contained in $\cE^{(\om)}(\R^n)$ if and only if 
$\om(t) = o(t)$ as $t \to \infty$.

\subsection{Whitney ultrajets}
Let $A\subseteq \R^n$ be a closed non-empty set. Let $\cE(A)$ be the set of \emph{Whitney jets} 
(of class $C^\infty$) on $A$, i.e.,  
$F=(F^\al)_{\al \in \N^n} \in  C^0(A,\R)^{\N^n}$ belongs to  
$\cE(A)$ if for all compact subsets $K \subseteq A$, all $p \in \N$, and all $|\al|\le p$ we have  
\[ 
(R^p_xF)^\al (y) = o(|y-x|^{p-|\al|}) \quad\text{ as } |y-x| \to 0, \, x,y \in K, 
\]
where 
 \[
 	(R^p_xF)^\al (y):= F^\al(y) - \sum_{|\be|\le p -|\al|} \frac{(y-x)^\be}{\be!} F^{\al+\be}(x).  
 \]
By Whitney's extension theorem \cite{Whitney34a}, the mapping $j^\infty_A : C^\infty(\R^n) \to \cE(A)$, 
which is well-defined by Taylor's theorem, is surjective.   

Let $\om$ be a weight function.  
A Whitney jet $F=(F^\al)_{\al \in \N^n} \in \cE(A)$ 
is called a \emph{$\om$-Whitney ultrajet of Beurling type} on $A$ 
if for all compact subsets $K \subseteq A$ and all integers $m \ge 1$ 
we have
\begin{equation} \label{jet1}
 	\|F\|^\om_{K,1/m}:=\sup_{x \in K} \sup_{\al \in \N^n} |F^\al(x)| \exp\big(-m \vh^*\big(\tfrac{|\al|}{m}\big)\big) < \infty
 \end{equation} 
 and 
 \begin{equation} \label{jet2}
 	| F |^\om_{K,1/m}  := \sup_{\substack{x,y \in K\\ x \ne y}} \sup_{p \in \N } \sup_{|\al| \le p}  |(R^p_x F)^\al (y)|  
 	\frac{(p+1-|\al|)!}{|x-y|^{p+1-|\al|}} \exp\big(-m \vh^*\big(\tfrac{p+1}{m}\big)\big) <\infty.   
 \end{equation}
 We denote by $\cE^{(\om)}(A)$ 
 the locally convex space of all $\om$-Whitney ultrajets $F$ of Beurling type on $A$ equipped with the project limit topology 
 with respect to the system of seminorms $\|F\|^\om_{K,1/m} + |F|^\om_{K,1/m}$. 

The space of \emph{$\om$-Whitney ultrajets of Roumieu type} on $A$ 
 is
 \[
 \cE^{\{\om\}}(A) := \{F \in \cE(A) : \forall K \Subset A ~\exists m\in \N_{\ge 1} : \|F\|^\om_{K,m} + |F|^\om_{K,m}<\infty \}
 \]
supplied with its natural locally convex topology.

\section{The condition \texorpdfstring{\eqref{integralcondition}}{(S$_r$)}} 
In this section we discuss $r$-strong pairs of weight functions.

\begin{lemma} \label{lem:stronger}
Let $\om$ and $\si$ be weight functions.
If the pair $(\om,\si)$ is $r$-strong for some $r \in (0,1)$, then it is $s$-strong for all $s \in [r,1]$.
In particular, the pair $(\om,\si)$ is strong and we have $\om(t) = O(\si(t))$ as $t \to \infty$. 
\end{lemma}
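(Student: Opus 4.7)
The first assertion is essentially a bookkeeping exercise: the exponent $r$ enters the condition \eqref{integralcondition} only through the factor $1+r$ in the denominator. For any $s \in [r,1]$ and every $u \ge 1$, the pointwise inequality $u^{1+s} \ge u^{1+r}$ holds, whence $u^{-(1+s)} \le u^{-(1+r)}$. I would multiply this inequality by the nonnegative quantity $\om(tu)$ and integrate over $u \in [1,\infty)$; the same constant $C$ from the hypothesis then works for $s$ in place of $r$. Specialising to $s=1$ is exactly the statement that $(\om,\si)$ is strong.

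\textbf{The bound $\om(t) = O(\si(t))$.} Here I would exploit monotonicity in the other variable. Since $\om$ is increasing, $\om(tu) \ge \om(t)$ for every $u \ge 1$, so the left-hand side of \eqref{integralcondition} is bounded below by
\[
\om(t) \int_{1}^\infty \frac{du}{u^{1+r}} = \frac{\om(t)}{r}.
\]
Combined with the hypothesised upper bound $C\si(t)+C$ this yields $\om(t) \le rC(\si(t)+1)$. Since $\si$ is a weight function, the axiom $\log t = o(\si(t))$ forces $\si(t) \to \infty$ as $t\to \infty$, and so $\om(t) = O(\si(t))$ follows by dividing through.

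\textbf{Obstacles.} I do not foresee any real obstacle: the entire lemma reduces to two one-line monotonicity arguments, one in the integration variable $u$ (for the upper estimate) and one in the argument of $\om$ (for the lower estimate). The only detail to check carefully is that $\si(t)$ is eventually bounded away from zero, which is immediate from the standard growth axioms on weight functions recalled in \Cref{sec:weights}.
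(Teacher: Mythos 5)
Your proposal is correct and follows essentially the same route as the paper: the first part is the pointwise monotonicity $u^{-(1+s)}\le u^{-(1+r)}$ for $u\ge1$, and the supplement uses $\om(tu)\ge\om(t)$ to bound the integral from below (the paper does this with the exponent $2$, you with $1+r$, which only changes the harmless constant $1/r$). The final absorption of the additive constant via $\si(t)\to\infty$ is also fine.
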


\begin{proof}
	The first part of the lemma is trivial, since
	\begin{equation} \label{trivial}
		\int_1^\infty  \frac{\om(ut)}{u^{1+s}} \, du \le \int_1^\infty  \frac{\om(ut)}{u^{1+r}} \, du	
	\end{equation}
	 for all 
	$s \in [r,1]$.  
  Since $\om$ is increasing, we have 
  $\om(t) = \om(t) \int_1^{\infty} \frac{du}{u^2} \le \int_1^{\infty} \frac{\om(ut)}{u^2} \, du$ 
  which implies the supplement.
\end{proof}

In the following proposition we discover a condition which is equivalent of the pair $(\om,\si)$ being $r$-strong for some $r \in (0,1)$. 
This condition can be preserved in the reduction procedure (see \Cref{lemma2}). 

\begin{proposition} \label{prop:integralcondition}
	Let $\om$ and $\si$ be weight functions. Then the pair $(\om,\si)$ is $r$-strong for some $r \in (0,1)$
	if and only if 
		\begin{equation}
		\label{strongercondition}
   		 \exists C>0 ~\exists K>H> 1   ~\exists t_0 \ge 0 ~\forall t\ge t_0 
    	~\forall j\in \N_{\ge1} :
    	\om(K^jt) \le C H^j \si(t).
	\end{equation}
\end{proposition}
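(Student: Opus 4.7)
The plan is to prove both implications by a dyadic decomposition of the integral in \eqref{integralcondition} along the geometric progression $\{K^j\}_{j\ge 0}$, exploiting monotonicity of $\om$ to convert integral estimates into pointwise estimates and vice versa. The key algebraic link between the two conditions is the identification $H = K^r$ (equivalently $r = \log H/\log K$); the strict inequality $r<1$ corresponds exactly to the strict inequality $H<K$.

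For the forward implication, I assume \eqref{integralcondition} and fix any $K>1$. Since $\om$ is increasing, for every $j\ge 1$,
\[
 \om(K^j t)\int_{K^j}^{\infty}\frac{du}{u^{1+r}}\le \int_{K^j}^{\infty}\frac{\om(tu)}{u^{1+r}}\,du \le C\si(t)+C.
\]
The left integral equals $K^{-jr}/r$, so $\om(K^j t)\le rC\,K^{jr}(\si(t)+1)$. Setting $H:=K^r$ yields $1<H<K$ because $r\in(0,1)$. Since $\log t = o(\si(t))$ by the definition of a weight function, $\si(t)\to\infty$, so for a sufficiently large $t_0$ we have $\si(t)+1\le 2\si(t)$ whenever $t\ge t_0$; this gives \eqref{strongercondition} with constant $2rC$.

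For the reverse implication, I assume \eqref{strongercondition} with constants $K>H>1$ and choose $r\in(\log H/\log K,\,1)$; this interval is non-empty precisely because $H<K$. Then $H/K^r<1$, so a geometric series will converge. Splitting
\[
 \int_1^{\infty}\frac{\om(tu)}{u^{1+r}}\,du = \sum_{j=0}^{\infty}\int_{K^j}^{K^{j+1}}\frac{\om(tu)}{u^{1+r}}\,du,
\]
bounding $\om(tu)\le \om(tK^{j+1})$ and $u^{-1-r}\le K^{-j(1+r)}$ on each block, and applying \eqref{strongercondition} for $t\ge t_0$, one gets
\[
 \int_1^{\infty}\frac{\om(tu)}{u^{1+r}}\,du \le CH(K-1)\Big(\sum_{j=0}^{\infty}(H/K^r)^j\Big)\si(t)=:C'\si(t).
\]
For $0<t<t_0$, monotonicity $\om(ut)\le \om(ut_0)$ combined with the same computation at $t=t_0$ bounds the integral by a finite constant $M$ independent of $t$; the two regimes combine to $\int \le C'\si(t)+M$ on all of $(0,\infty)$, which is \eqref{integralcondition}.

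Neither direction presents a serious obstacle; the only subtlety is choosing $K$ and $r$ so that $H=K^r$ lies strictly between $1$ and $K$. This is exactly why the equivalence requires $r\in(0,1)$ (and hence $H<K$ strict): the boundary case $r=1$ would force $H=K$, where the geometric sum $\sum (H/K^r)^j$ diverges.
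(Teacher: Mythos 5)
Your proof is correct and follows essentially the same route as the paper: both directions rest on the dyadic decomposition along $\{K^j\}_{j\ge 0}$, monotonicity of $\om$, and the identification $H=K^r$ (i.e., $r=\log H/\log K$). Your forward direction is a slight streamlining (you compare $\om(K^jt)$ directly with the tail $\int_{K^j}^\infty u^{-1-r}\,du$ rather than with a lower Riemann sum of the whole integral, as the paper does), and you are more explicit than the paper about handling $0<t<t_0$ in the converse, but these differences are cosmetic.
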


\begin{proof}
	If $\om$ and $\si$ satisfy \eqref{integralcondition} with $r \in (0,1)$ and 
	$K>1$ is arbitrary, then since $\om$ is increasing, for all integers $j\ge0$, we have   
	\begin{align} \label{computation1}
		\frac{\om(K^j t)}{K^{rj}} 
		&\le \sum_{i=0}^\infty \frac{\om(K^i t)}{K^{ri}} 
		= \frac{K^{1+r}}{K-1} \sum_{i=0}^\infty \frac{\om(K^i t)}{K^{(1+r)(i+1)}} (K^{i+1}-K^i)
		\\
		&\le \frac{K^{1+r}}{K-1} \int_1^\infty \frac{\om(tu)}{u^{1+r}} \, du
		\le C \frac{K^{1+r}}{K-1} ( \si(t) +1 ). \nonumber 
	\end{align}
	This implies \eqref{strongercondition} with $H:=K^r$.

	Suppose that \eqref{strongercondition} is fulfilled. 
	Then we have $H = K^{r_0}$ with $r_0 := \log H/\log K \in (0,1)$. 
	Let $r \in (r_0,1)$. Then for sufficiently large $t$, 
	\begin{align*}
		\int_1^\infty  \frac{\om(tu)}{u^{1+r}} \, du 
		&=
		\sum_{j=0}^\infty \int_{K^j}^{K^{j+1}} \frac{\om(ut)}{u^{1+r}} \, du 
		\le  
		\sum_{j=0}^\infty \om(K^{j+1} t) \int_{K^j}^{K^{j+1}} \frac{1}{u^{1+r}} \, du
		\\
		&\le   
		\sum_{j=0}^\infty C  H^{j+1}\si(t) r^{-1} (K^{-rj} - K^{-r(j+1)}) 
		\\
		&= C r^{-1} (K^r-1) \si(t) \sum_{j=0}^\infty (K^{r_0-r})^{j+1}
		= C_1 \si(t),
	\end{align*} 
	since $K^{r_0-r}<1$. Thus $(\om,\si)$ is $r$-strong. 
\end{proof}

The following lemma shows that \Cref{Beurling} generalizes the extension result 
stated in the introduction.

\begin{lemma} \label{lemma3}
  Let $\om$ be a weight function. The following conditions are equivalent:
  \begin{enumerate}[label=\textnormal{($\arabic*$)}]
  		\item  $\om$ is strong.
  		\item  Condition \eqref{strongercondition} holds with $\si=\om$ and $C=1$.
  		\item  $(\om,\om)$ is $r$-strong for some $r \in (0,1)$.
  	\end{enumerate}	
\end{lemma}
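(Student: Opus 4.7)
I would prove the equivalences in the cycle $(2) \Rightarrow (3) \Rightarrow (1) \Rightarrow (2)$. The first arrow is an immediate application of \Cref{prop:integralcondition} with $\si = \om$, and the second is the specialization of \Cref{lem:stronger} to the pair $(\om,\om)$. All the work is concentrated in $(1) \Rightarrow (2)$.

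For $(1) \Rightarrow (2)$, my plan is to pass to the auxiliary weight
\[
  \phi(t) := \int_1^\infty \om(tu)/u^2\, du,
\]
so that the strong condition becomes precisely the two-sided comparison $\om(t) \le \phi(t) \le C\om(t) + C$ for some $C$; after enlarging $C$ one may assume $C > 1$. The key identity, obtained by the substitution $v = Ku$, is $\phi(Kt) = K\int_K^\infty \om(tv)/v^2\, dv$, which combined with the monotonicity of $\om$ gives the naive bound $\phi(Kt) \le K\phi(t) - (K-1)\om(t)$. Feeding the hypothesis back in the form $\om(t) \ge \phi(t)/C - 1$ then yields
\[
  \phi(Kt) \le \bigl[K - (K-1)/C\bigr]\phi(t) + (K-1) =: H\phi(t) + (K-1),
\]
and the crucial observation is that $H \in (1, K)$ for every $K > 1$. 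Iterating this recursion and absorbing the additive term for $t$ sufficiently large, then reinserting the comparison $\om \asymp \phi$, delivers \eqref{strongercondition} for the pair $(\om,\om)$ with some multiplicative constant $C_0 \ge 1$.

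To normalize $C_0$ down to $1$ as required by $(2)$, I would pick $N \in \N$ with $(K/H)^N > C_0$ and pass to $K' := K^N$, $H' := C_0 H^N$. Then $K' > H' > 1$, and using $C_0 \le C_0^j$ for $j \ge 1$ one obtains $\om({K'}^j t) \le C_0 H^{Nj}\om(t) \le {H'}^j \om(t)$ for all $j \ge 1$ and $t$ large enough --- which is precisely $(2)$.

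The main obstacle is the self-improving step that produces $H < K$ strictly. The strong condition used naively only gives $\om(Kt) = O(K\om(t))$ --- an exponent $H \le K$ rather than strictly less --- and from that alone one cannot reach any $r$-strong condition with $r < 1$. The trick is that $\om$ appears inside the integral defining $\phi$, so inserting the lower bound $\om \ge \phi/C - 1$ back into the recursion trades the multiplicative constant $K$ for the strictly smaller $K - (K-1)/C$, which is exactly what is needed to close the argument through \Cref{prop:integralcondition}.
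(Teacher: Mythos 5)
Your proposal is correct, and for two of the three arrows it coincides with the paper: (2)~$\Rightarrow$~(3) is \Cref{prop:integralcondition} and (3)~$\Rightarrow$~(1) is \Cref{lem:stronger}, exactly as in the text. The difference lies entirely in (1)~$\Rightarrow$~(2). The paper disposes of this step by quoting the characterization from \cite{MeiseTaylor88a} that $\om$ is strong iff $\limsup_{t\to\infty}\om(Kt)/\om(t)<K$ for some $K>1$, and then iterating $\om(Kt)\le(K-\ep)\om(t)$, which hands over the constant $C=1$ for free. You instead give a self-contained derivation: passing to $\phi(t)=\int_1^\infty\om(tu)u^{-2}\,du$ (the concave weight $\ka$ of the paper's subsequent remark), using the change of variables $v=Ku$ to get $\phi(Kt)\le K\phi(t)-(K-1)\om(t)$, and re-inserting $\om\ge\phi/C-1$ to obtain the recursion with factor $H=K-(K-1)/C\in(1,K)$ --- this is precisely the self-improvement that the Meise--Taylor lemma encapsulates, and your computation of it checks out (the additive term $(K-1)$ is absorbed for large $t$ since $\phi\ge\om\to\infty$). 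Because your iteration produces \eqref{strongercondition} only with some constant $C_0\ge1$, you need the extra normalization step via $K'=K^N$, $H'=C_0H^N$ with $(K/H)^N>C_0$ to reach $C=1$; that step is also correct (using $C_0\le C_0^j$ for $j\ge1$), and it is a genuine piece of work the paper avoids by its choice of citation. In short: your route buys self-containedness at the cost of an extra renormalization; the paper's buys brevity at the cost of an external reference whose proof is essentially your argument.
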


\begin{proof}
  (1) $\Rightarrow$ (2):
  It was shown in \cite{MeiseTaylor88a} that 
  the $\om$ being strong is equivalent to 
  \[
    \exists K>1 : \limsup_{t \to \infty} \frac{\om(Kt)}{\om(t)} <K.
  \]
  That means there exist $\ep>0$ and $t_0 \ge 0$ such that
  \begin{equation*} 
    \frac{\om(Kt)}{\om(t)} \le K-\ep, \quad \text{ for all } t \ge t_0.  
  \end{equation*}
  Thus condition \eqref{strongercondition} with $C=1$, $H = K-\ep$, and $\si=\om$ follows by iteration. 
 
  (2) $\Rightarrow$ (3) follows from \Cref{prop:integralcondition}.

  (3) $\Rightarrow$ (1) is a consequence of	\Cref{lem:stronger}.   
\end{proof}

Let us discuss the relevance of condition \eqref{strongercondition} (or equivalently \eqref{integralcondition} for some $r\in (0,1)$) 
for the extension 
problem. 
Suppose that $\om$ is a weight function that is not strong and let $A \subseteq \R^n$ be a closed non-empty set.
We consider $j^\infty_A \cE^{(\om)}(\R^n)$ and look for spaces of 
Whitney ultrajets $\cE^{(\si)}(A)$ as large as possible and contained in $j^\infty_A \cE^{(\om)}(\R^n)$. 
If $\ol \om$ is any strong weight function such that 
$\om(t) = O(\ol \om(t))$ as $t \to \infty$, 
then $\cE^{(\ol \om)}(A) \subseteq  j^\infty_A \cE^{(\om)}(\R^n)$, since 
$\cE^{(\ol \om)}(A) = j^\infty_A \cE^{(\ol \om)}(\R^n)$ and 
$\cE^{(\ol \om)}(\R^n) \subseteq \cE^{(\om)}(\R^n)$. 
The following \Cref{relevance} and \Cref{Beurling} show 
that for any such $\ol \om$ there is a weight function $\si$ 
such that  $\cE^{(\ol \om)}(A) \subsetneq \cE^{(\si)}(A) \subseteq  j^\infty_A \cE^{(\om)}(\R^n)$ 
(independently of $A$).

\begin{proposition} \label{relevance}
  Let $\om$ be a weight function that is not strong.
  For any strong weight function $\ol \om$ with 
  $\om(t) = O(\ol \om(t))$ as $t \to \infty$ 
  there exists a weight function $\si$ such that 
  the pair $(\om,\si)$ is $r$-strong for some $r \in (0,1)$
  and we have 
  $\si(t) = O(\ol \om(t))$, $\ol \om(t) \ne O(\si(t))$,  
  and $\si(t) =o(t)$ as $t \to \infty$.    
\end{proposition}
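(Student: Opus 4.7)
The plan is to construct $\si$ as essentially the smallest weight function for which condition \eqref{strongercondition} can hold for the pair $(\om,\si)$. Since $\ol\om$ is strong, \Cref{lemma3} supplies $K > H > 1$ and constants $C_0, t_0 \ge 0$ with $\ol\om(K^j t) \le H^j(\ol\om(t) + C_0)$ for all $j \ge 0$ and $t \ge t_0$. I set
\[
\si(t) := \sup_{j \ge 0} H^{-j}\om(K^j t).
\]
First I would check that $\si$ is a weight function: the function $s \mapsto \si(e^s) = \sup_j H^{-j}\vh_\om(s + j\log K)$ is a supremum of convex functions (translates of $\vh_\om$ rescaled by $H^{-j}$), hence convex; monotonicity, doubling, and $\log t = o(\si(t))$ follow from the corresponding properties of $\om$ together with $\si \ge \om$; continuity and $\si(0)=0$ can be arranged by a standard modification on a compact interval inside the equivalence class.

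The three ``easy'' conclusions then follow quickly. That $(\om,\si)$ is $r$-strong for some $r \in (0,1)$ holds by construction: the inequality $\om(K^j t) \le H^j\si(t)$ is exactly \eqref{strongercondition} with $C=1$, so \Cref{prop:integralcondition} applies. The bound $\si = O(\ol\om)$ follows from the estimate
\[
H^{-j}\om(K^j t) \le H^{-j}\bigl(D\ol\om(K^j t) + D\bigr) \le D\bigl(\ol\om(t)+C_0\bigr) + D,
\]
combining $\om = O(\ol\om)$ with the choice of $K,H$. Finally, $\si = o(t)$ is immediate from $\si = O(\ol\om)$, since every strong weight function satisfies $\ol\om(t) = o(t)$.

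The hard part will be verifying $\ol\om \ne O(\si)$. Since $\om$ is not strong and $\ol\om$ is strong, the two cannot be equivalent; combined with $\om = O(\ol\om)$ this forces $\ol\om \ne O(\om)$, so I can choose a sequence $s_k \to \infty$ with $\om(s_k)/\ol\om(s_k) \to 0$. My goal is then to show $\si(s_k)/\ol\om(s_k) \to 0$ along a suitable subsequence. Decomposing each term as
\[
\frac{H^{-j}\om(K^j s_k)}{\ol\om(s_k)} = \frac{\om(K^j s_k)}{\ol\om(K^j s_k)}\cdot \frac{\ol\om(K^j s_k)}{H^j \ol\om(s_k)},
\]
the second factor is bounded by $1 + o(1)$ thanks to the strongness of $\ol\om$, while the first is bounded by a constant $D$. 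The main obstacle is controlling the first factor uniformly in $j$: although $\om/\ol\om$ vanishes along $s_k$, there is a priori no reason for it to vanish along the dilates $K^j s_k$. I would try to overcome this by exploiting the interaction between the convex structure of $s \mapsto \om(e^s)$ and $s \mapsto \ol\om(e^s)$ together with the non-strongness of $\om$ (which prevents $\om$ from tracking $\ol\om$ across a geometric sequence of scales), and by selecting $s_k$ carefully via a diagonal argument along pairs $(k,j(k))$ in which $K^{j(k)} s_k$ also falls in a region where $\om/\ol\om$ is small. If this direct approach runs into trouble, the fallback is to replace $\si$ by a perturbation such as $\om(t) + \ol\om(t)^{1-\ep}$ or by applying the sup construction to a smaller strong majorant $\ol\om' \le \ol\om$ constructed within the equivalence class, at the cost of reworking the $r$-strong verification.
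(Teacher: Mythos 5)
Your construction and the verification of the ``easy'' properties (that $\si$ is, up to the usual modification near $0$, a weight function; that $(\om,\si)$ satisfies \eqref{strongercondition} with $C=1$ and hence is $r$-strong by \Cref{prop:integralcondition}; that $\si=O(\ol\om)$ and $\si=o(t)$) are sound, and your $\si(t)=\sup_{j}H^{-j}\om(K^jt)$ is exactly of the form $\ta_{r_0}(t)=\sup_j K^{-r_0 j}\om(K^jt)$ with $r_0=\log H/\log K$ that the paper also uses (its $\si_r(t)=\int_1^\infty\om(tu)u^{-1-r}\,du$ is sandwiched between such suprema). The genuine gap is the step you leave open, $\ol\om\ne O(\si)$, and for your particular $\si$ this claim is not merely unproved but fails in general. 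Note that $\si(Kt)=\sup_{j\ge0}H^{-j}\om(K^{j+1}t)=H\sup_{i\ge1}H^{-i}\om(K^it)\le H\,\si(t)$ with $H<K$, so by the Meise--Taylor criterion quoted in the proof of \Cref{lemma3} your $\si$ is itself a \emph{strong} weight function with $\om\le\si$. It is therefore an admissible choice of $\ol\om$ in the statement (with the same constants $K,H$, since $\si(K^jt)\le H^j\si(t)$), and running your construction on this $\ol\om$ returns $\si$ itself, so $\ol\om\ne O(\si)$ is violated. The source of the trouble is that you work exactly at the critical exponent $r_0$ determined by the strongness constants of $\ol\om$: the supremum can be comparable to $\ol\om(t)$ whenever the dilates $K^jt$ keep landing where $\om$ is as large as $\ol\om$ --- precisely the obstruction you identify when trying to make $\om(K^js_k)/\ol\om(K^js_k)$ small uniformly in $j$. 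That route cannot be repaired for this $\si$.

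The paper escapes by moving strictly above the critical exponent. It fixes $r_0$ with $\int_1^\infty\ol\om(tu)u^{-1-r_0}\,du\le C\ol\om(t)$, takes $r_0<s<r<1$, and uses the interlacing \eqref{interlace1}--\eqref{interlace2} between $\si_r$ and $\ta_r$, noting that $\ta_s=O(\ol\om)$ for every $s>r_0$. The heart of the argument, where non-strongness of $\om$ finally enters, is the claim \eqref{relevance:claim} that $\ta_s\ne O(\ta_r)$ for $s<r$, proved by a dichotomy: either the supremum defining $\ta_s(t)$ is eventually attained at bounded $j$, in which case $\ta_s$ is equivalent to $\om$ and \Cref{lem:stronger} would make $\om$ strong (contradiction); or it is nearly attained along $j_n\to\infty$, in which case $\ta_s(t_n)\ge K^{(r-s)j_n}\bigl(\ta_r(t_n)-\tfrac1n\bigr)$ blows up relative to $\ta_r(t_n)$. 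Combining $\si_s=O(\ol\om)$ with $\si_s\ne O(\si_r)$ yields $\ol\om\ne O(\si_r)$. To salvage your proposal, replace $H$ by some $\tilde H\in(H,K)$ in the supremum (equivalently, pass to an exponent $r\in(r_0,1)$) and then supply this dichotomy argument; as written, the main step is missing and the stated construction does not satisfy the conclusion.
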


\begin{proof}
  By \Cref{lemma3}  
  there exist constants $r_0 \in (0,1)$, $C>0$, and $t_0\ge 0$ such that 
  \begin{equation} \label{majorant}
  	\int_1^\infty \frac{\ol \om(tu)}{u^{1+r_0}} \, du \le C\, \ol \om(t) \quad \text{ for all } t \ge t_0.
  \end{equation}
  Since $\om(t) = O(\ol \om(t))$ as $t \to \infty$,
  there is a constant $C_1>0$ and $t_1 \ge t_0$ such that 
  $\om(t) \le C_1\, \ol \om(t)$ for all $t\ge t_1$.

  Let $r \in (r_0,1)$ and consider 
  \[
  	\si_r(t) := \int_1^\infty \frac{\om(tu)}{u^{1+r}} \, du, \quad t \ge 0. 
  \]
  The integral converges and we have $\si_r (t) \le C_2 \ol \om(t)$ for all $t \ge t_1$, by \eqref{majorant}. 
  Moreover, $\si_r(t) \le \si_s(t)$ for all $t \ge t_1$ provided that $r_0 <s < r<1$. Since $\om$ is increasing 
  we have $\om(t) \le r \si_r(t) \le \si_r(t)$ for all $r$ and all $t$. 

  Clearly, the pair $(\om,\si_r)$ is $r$-strong 
  for all $r \in (r_0,1)$. 
  By the inequalities $\om(t) \le \si_r(t) \le C_2 \ol \om(t)$ for all 
  $t\ge t_1$ we may conclude that $\si_r(t) =o(t)$ as $t \to \infty$, since every strong weight function
  satisfies this, and $\log(t) = o(\si_r(t))$ as $t \to \infty$, since $\om$ has this property (cf.\ \eqref{defom3}).
  There is a constant $C_3>0$ such that $\si_r(2t) \le C_3\, \si_r(t)$ for large $t$ since 
  $\om$ has this property  (cf.\ \eqref{defom1}).
  Since $t \mapsto \om(e^t)$ is convex  (cf.\ \eqref{defom4}), 
  it follows that also $t \mapsto \si_r(e^t)$ is convex for all $r$. 
  Consequently, $\si_r$ is continuous and it is increasing since $\om$ is increasing.
  So $\si_r$ is a weight function in the sense of \Cref{sec:weights}.

  The computation \eqref{computation1} shows that for all $r \in (0,1)$ and all $t\ge0$, 
  \begin{equation} \label{interlace1}
  	\ta_r(t) := \sup_{j \in \N} \frac{\om(K^j t)}{K^{rj}}  \le C_4\, \si_r(t).
  \end{equation}
  On the other hand, for $0<s<r$,
  \begin{align} \label{interlace2}
  	\si_r(t) &= \int_1^\infty \frac{\om(ut)}{u^{1+r}}\, du  = \sum_{j=0}^\infty \int_{K^j}^{K^{j+1}} \frac{\om(ut)}{u^{1+r}}\, du
  	\\
  	&\le  \sum_{j=0}^\infty  \frac{\om(K^{j+1}t)}{K^{j(1+r)}} (K^{j+1}-K^j)
  	= K^r(K-1)\sum_{j=0}^\infty  \frac{\om(K^{j+1}t)}{K^{s(j+1)}} \frac{K^{s(j+1)}}{K^{r(j+1)}} \nonumber
  	\\
  	&\le K^r(K-1) \sum_{j=0}^\infty (K^{s-r})^{j+1} \cdot  \ta_s(t) = C_5 \, \ta_s(t) \nonumber
  \end{align}
  for all $t \ge 0$. Arguing similarly as for $\si_r$ one sees easily that $\ta_r$, for $r \in (r_0,1)$, form weight functions 
  satisfying $\ta_r(t) = O(\ol \om(t))$ and $\ta_r(t) =o(t)$ as $t \to \infty$. Moreover,  $\ta_r$ satisfies \eqref{strongercondition} relative to 
  $\om$ by definition. We will use the interlacing properties \eqref{interlace1} and \eqref{interlace2} in order to show that  
  $\ol \om(t) \ne O(\si_r(t))$ as $t \to \infty$ for all $r \in (r_0,1)$. 
  In fact, we will show that 
  \begin{equation} \label{relevance:claim}
  	\ta_s(t) \ne O(\ta_r(t)), \quad \text{ if }  s<r,	
  \end{equation}
  which implies $\si_s(t) \ne O(\si_r(t))$ as $t \to \infty$, 
  by \eqref{interlace1} and \eqref{interlace2}. Consequently, for given $r \in (r_0,1)$ take $s \in (r_0,r)$ and we see that 
  $\si_{s} (t) = O(\ol \om(t))$ and $\si_s(t) \ne O(\si_{r}(t))$ as $t \to \infty$ imply 
  that $\ol \om(t) \ne O(\si_r(t))$ as $t \to \infty$.

  Let us prove \eqref{relevance:claim}. Two cases may occur: 
  First, if there exists $N \in \N$ and $T > 0$ such that for all $t \ge T$
  \[
 	\sup_{j \in \N} \frac{\om (K^j t)}{K^{sj}} 
 	= \max_{0 \le j \le N} \frac{\om (K^j t)}{K^{sj}}, 
  \]
  then $\ta_s(t) \le \om(K^N t)$ for such $t$ and since $\om(2t) = O(\om(t))$ as $t \to \infty$ (cf.\ \eqref{defom1}),
  we may conclude that the weight functions $\om$ and $\ta_s$ are equivalent. 
  As a consequence $\om$ is a strong weight function (by \Cref{lem:stronger}) contrary to the assumption of the 
  proposition.
  Otherwise there are sequences $j_n\to \infty$ and $t_n \to \infty$ 
  such that 
  \[
  	\frac{\om (K^{j_n} t_n)}{K^{r j_n}} 
  	\ge  \sup_{j \in \N} \frac{\om (K^j t_n)}{K^{rj}} - \frac{1}{n} 
  	= \ta_r(t_n)- \frac{1}{n}.  
  \]
  Consequently,
  \[
  	\ta_s(t_n) \ge \frac{\om (K^{j_n} t_n)}{K^{sj_n}} 
  	= K^{(r-s)j_n} \frac{\om (K^{j_n} t_n)}{K^{rj_n}} 
  	\ge K^{(r-s)j_n} \Big(\ta_r(t_n)- \frac{1}{n}\Big).
  \]
  This implies \eqref{relevance:claim}. The proof is complete.  
\end{proof}

\begin{remark}
  The concave weight function $\kappa(t) := \int_1^\infty \frac{\om(ty)}{y^2}\, dy$ (cf.\ \cite[Remark 3.20]{MeiseTaylor88a} and 
  \cite[Proposition 1.3]{BonetMeiseTaylor92}) 
  satisfies 
  \[
    \kappa(t) \ge \frac{\om(K^jt)}{K^j} \quad \text{ for all } t>0,~K>1,~j \in \N. 
  \]
  Indeed, this follows from $\om \le \ka$ and concavity of $\ka$ or more directly by
  \begin{multline*}
    \kappa(t) 
    = \int_1^\infty \frac{\om(ty)}{y^2}\, dy
    = \int_{K^{-j}}^\infty \frac{\om(K^jt u)}{K^ju^2}\, du
    \ge \frac{\om(K^jt)}{K^j} \int_{1}^\infty \frac{1}{u^2}\, du
    = \frac{\om(K^jt)}{K^j},
  \end{multline*}
  using that $\om$ is increasing. 
  Note that $\ka$ defines the largest class of ultradifferentiable functions (respectively, jets) 
  among all weight functions $\si$ such that $\int_1^\infty \frac{\om(ty)}{y^2}\, dy = O(\si(t))$ as $t \to \infty$ (i.e.\ 
  such that the pair $(\om,\si)$ is strong). 
  In the situation of \Cref{relevance} we have 
  $\om(t) \le \ka(t) \le C_1\, \si_r(t) \le C_2\, \ol \om(t)$ for sufficiently large $t$ and $r \in (r_0,1)$ 
  (thanks to \eqref{trivial}). 
\end{remark}

\begin{example} \label{example}
	For $\al>0$ we consider the weight functions $\om_\al$ where 
	\[
		\om_\al(t) = \frac{t}{(\log t)^\al}, \quad \text{ for sufficiently large } t. 
	\]
	It is easy to see that the definition of $\om_\al$ can be extended to small positive values of $t$ such that 
	it is a weight function in the sense of \Cref{sec:weights}. As observed in \cite[Example 3.8]{BonetMeiseTaylor92} 
	if $\al>1$ then
	\[
		\kappa_\al(t) := \int_1^\infty \frac{\om_\al(ty)}{y^2}\, dy = \frac{\om_{\al-1}(t)}{\al-1}, \quad \text{ for large } t.
	\]
	(Apparently there is a small inaccuracy concerning the constant $\al-1$ in the above reference.) 
	This shows that for $\al>1$ the pair $(\om_\al,\om_{\al-1})$ is strong
	whereas the weight function $\om_\al$ is not strong. 

	On the other hand
	the pair $(\om_\al,\om_{\al-1})$ is not $r$-strong for any $r \in (0,1)$. Indeed it is easy to see 
	that the condition \eqref{strongercondition} is violated. 
	Otherwise there would exist constants $K>H>1$ such that for all integers $j \ge 1$ and sufficiently large $t$ 
	\[
		\frac{\om_\al(K^jt)}{H^j\om_{\al-1}(t)} = 	 \Big(\frac{K}{H}\Big)^j \frac{(\log t)^{\al-1}}{(\log t + j \log K)^\al}
	\]
	is bounded which is obviously a contradiction. 
	Note that 
	\[
		\int_1^\infty \frac{\om_\al(ty)}{y^{1+r}}\, dy = \om_{\al-1}(t) \cdot t^{r-1} E_\al((r-1)\log t),  	
	\]
	where $E_\al(x) := \int_1^\infty y^{-\al} e^{-x y}\, dy$ is the exponential integral.
\end{example}

\begin{remark} \label{rem:ultraholomorphic}
	The condition \eqref{integralcondition} plays an interesting role in the context of ultraholomorphic sectorial extensions. 
	The growth index $\ga(\si,\om)$ introduced in \cite{Jimenez-Garrido:2020aa} is defined by 
	\[
		\ga(\si,\om) := \sup \{ s>0 : (\om,\si) \text{ ist $1/s$-strong}\}.	
	\]  
	Note that a pair of weight functions $(\om,\si)$ is $r$-strong for some $r \in (0,1)$ if and only if $\ga(\si,\om)>1$. 
	The growth index for a pair of weight functions is a generalization of the growth index $\ga(\om) := \ga(\om,\om)$ 
	of a single weight function $\om$
	considered in \cite{Jimenez-Garrido:2019aa,Jimenez-Garrido:2019ab} imitating the growth index $\ga(M)$ introduced by 
	\cite{MR2011916} for a weight sequence $M$. 
	In the mentioned papers extension results of Borel--Ritt type are proved: given a formal power series with admissible growth behavior 
	of the coefficients one looks for
	an ultraholomorphic function defined on a sector in the Riemann surface of the logarithm and asymptotic to the 
	given series.
	The growth indices give sharp upper bounds for the aperture of the sectors on which the extension exists.
\end{remark}

\section{The reduction lemma} \label{sec:reduction}

The following lemma is the key to \Cref{Beurling}. Its proof contains ideas from 
\cite[Lemma 1.6, Lemma 1.7]{BMT90} and \cite[Lemma 4.4]{BBMT91}.

\begin{lemma}[Reduction lemma] \label{lemma2}
  Let $\omega$ and $\sigma$ be weight functions satisfying the following conditions:
  \begin{enumerate}[label=\textnormal{($\alph*$)}]
  	\item $\om$ is concave. \label{i:1}
    \item $\si(t) = o(t)$ as $t \to \infty$.  \label{i:2}
    \item $(\om,\si)$ is $r$-strong for some $r \in (0,1)$. \label{i:3}
  \end{enumerate}  
  Suppose that $f : [0,\infty) \to [0,\infty)$ is a function satisfying $\si(t) = o(f(t))$ as $t \to \infty$.
  Then there exist weight functions $\tilde \omega$ and $\tilde \sigma$ 
  such that 
  \[
  	\text{$\om(t) = o(\tilde \om(t))$, $\si(t) = o(\tilde \si(t))$, and 
  	$\tilde \si(t) = o(f(t))$ as $t \to \infty$}
  \]
  and with the following properties:
  \begin{enumerate}[label=\textnormal{($\alph*$)}]
  	\item[\mylabel{t:1}{\thetag{$\tilde a$}}] $\tilde \om$ is concave.
    \item[\mylabel{t:2}{\thetag{$\tilde b$}}]  
    $\tilde \si(t) = o(t)$ as $t \to \infty$. 
    \item[\mylabel{t:3}{\thetag{$\tilde c$}}] $(\tilde \om,\tilde \si)$ is $r$-strong for some $r \in (0,1)$.
  \end{enumerate}
\end{lemma}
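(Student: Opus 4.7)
My plan is to reformulate \ref{i:3} via \Cref{prop:integralcondition}, amplify both $\om$ and $\si$ by a common sufficiently slow factor $\eta$, and then upgrade the products to bona fide weight functions of the required type.

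By \Cref{prop:integralcondition}, hypothesis \ref{i:3} is equivalent to the existence of $C>0$, $K>H>1$ and $t_0\ge 0$ such that $\om(K^jt)\le CH^j\si(t)$ for all $t\ge t_0$ and all $j\in\N$; note the case $j=0$ yields $\om=O(\si)$, and iterating the inequality with $H<K$ forces $\om(t)=o(t)$. Fix $A\in(1,K/H)$. The first step is to construct a continuous non-decreasing $\eta:[0,\infty)\to[1,\infty)$ with $\eta(t)\to\infty$, with $\eta(Kt)\le A\eta(t)$ for all sufficiently large $t$, and with $\eta(t)\si(t)=o(f(t))$ and $\eta(t)\si(t)=o(t)$ as $t\to\infty$. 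Such an $\eta$ arises by linearly interpolating the values $\eta(t_k):=k$ on a rapidly increasing sequence $t_k\to\infty$: pick $t_k$ so sparsely that $t_{k+1}/t_k\to\infty$ and both $f(t)/\si(t)$ and $t/\si(t)$ exceed $k^3$ for $t\ge t_k$; then $\eta(Kt)/\eta(t)\to 1$ and the two $o$-conditions follow immediately.

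Set $\tilde\om_1:=\eta\om$ and $\tilde\si_1:=\eta\si$. By construction $\om=o(\tilde\om_1)$, $\si=o(\tilde\si_1)$, $\tilde\si_1=o(f)$, and $\tilde\si_1=o(t)$. The strong estimate transfers by iteration of $\eta(Kt)\le A\eta(t)$: for large $t$ and all $j\in\N$,
\[
\tilde\om_1(K^jt)=\eta(K^jt)\,\om(K^jt)\le A^j\eta(t)\cdot CH^j\si(t)=C(AH)^j\tilde\si_1(t),
\]
and $AH<K$, so $(\tilde\om_1,\tilde\si_1)$ satisfies \eqref{strongercondition}; hence it is $r'$-strong for some $r'\in(0,1)$ by \Cref{prop:integralcondition}.

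The last step is to replace $\tilde\om_1$ and $\tilde\si_1$ by bona fide weight functions $\tilde\om$ (concave, as required by \ref{t:1}) and $\tilde\si$ (with $t\mapsto\tilde\si(e^t)$ convex). For $\tilde\om$ I would take the least concave majorant of $\tilde\om_1$, which exists and is finite since $\tilde\om_1(t)=o(t)$; subadditivity then yields \eqref{defom1} automatically. For $\tilde\si$ I would pull back via $\log$ the least convex majorant of $t\mapsto\tilde\si_1(e^t)$. Both envelope operations inflate the underlying functions only by an asymptotically bounded multiplicative factor, because $\om$ is already concave and $t\mapsto\om(e^t)$ already convex, so the deviations of $\tilde\om_1$ from concavity and of $\tilde\si_1\circ\exp$ from convexity are due solely to the slow multiplier $\eta$. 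Choosing $\eta$ smoothly (for example so that $\log\eta$ is a suitably controlled function of $\log t$, in the spirit of \cite[Lemma 1.7]{BMT90} and \cite[Lemma 4.4]{BBMT91}) turns the envelope corrections into a bounded multiplicative perturbation, absorbed by the strict slack $AH<K$ after, if necessary, shrinking $A$ towards $1$. The remaining axiom $\log t=o(\tilde\om(t)),\,o(\tilde\si(t))$ follows from $\om=o(\tilde\om)$, $\si=o(\tilde\si)$ and the fact that $\om,\si$ are themselves weight functions.

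The main obstacle I expect is precisely this envelope upgrade: one must verify quantitatively that passing to the least concave (resp.\ logarithmically convex) majorant does not enlarge the functions by more than a multiplicative constant, so that the $r'$-strong estimate for $(\tilde\om_1,\tilde\si_1)$ survives on $(\tilde\om,\tilde\si)$. This is where \ref{i:1} and \ref{i:2} do real work alongside the slowness of $\eta$, and where the argument parallels the technical lemmas of \cite{BMT90,BBMT91} cited as the source of the proof idea.
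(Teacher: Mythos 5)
Your overall mechanism is the right one and matches the paper's in spirit: you exploit the slack $H<K$ in \eqref{strongercondition} to absorb a slowly growing amplification factor $\eta(t)\to\infty$, exactly as the paper absorbs the factor $(n+j+1)/(n-2)\le Dj$ into $\tilde H^j$ for $\tilde H\in(H,K)$. The transfer computation $\tilde\om_1(K^jt)\le C(AH)^j\tilde\si_1(t)$ and the verification of $\si=o(\tilde\si_1)$, $\tilde\si_1=o(f)$, $\tilde\si_1=o(t)$ are fine. But the step you yourself flag as ``the main obstacle'' is not a technicality to be deferred; it is the entire content of the lemma, and as stated your argument for it does not go through. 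The least concave majorant $\hat g$ of $g=\eta\om$ satisfies, for $t\le b$, a chord bound of the form $\hat g(t)\le g(t)+t\sup_{b\ge t}\eta(b)\om(b)/b$, so the bounded-factor claim $\hat g\le Cg$ forces $t\mapsto\eta(t)\om(t)/t$ to be almost decreasing. Concavity of $\om$ only gives that $\om(t)/t$ is decreasing, possibly with long near-plateaus, and on such a plateau the product $\eta(t)\om(t)/t$ increases by the full factor by which $\eta$ grows there. Hence the inflation is \emph{not} controlled by the constraints you impose on $\eta$ (which involve only $f/\si$ and $t/\si$); you would need an additional constraint tying the growth of $\eta$ to the decay of $\om(t)/t$, and you would then still have to check that the resulting $\tilde\om$ satisfies \eqref{defom4} (concavity of $\hat g$ does not by itself give convexity of $\hat g\circ\exp$, and you cannot repair \eqref{defom4} afterwards by passing to an equivalent weight function without risking the loss of concavity, since the lemma needs both properties for the \emph{same} function $\tilde\om$).

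The paper avoids the envelope problem entirely by an additive, piecewise construction: on $[y_n,x_{n+1})$ it sets $\tilde\om=n\om-c_n$, and on the transition intervals $[x_n,y_n)$ it inserts the tangent line of $(n-1)\om$ at $x_n$, with $y_n$ chosen by $\om'(y_n)=\frac{n-1}{n}\om'(x_n)$ so that the pieces glue in a $C^1$ fashion with decreasing derivative. This makes $\tilde\om$ concave and $\tilde\om\circ\exp$ convex \emph{by construction}, while the constants $c_n=\sum\om(z_{i+1})$ are controlled via \eqref{px3} to give the two-sided bound $(n-2)\om\le\tilde\om\le n\om$, which is all that is needed for the transfer. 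If you want to salvage the multiplicative route, you must either prove the almost-decrease of $\eta\om(t)/t$ for a suitably adapted $\eta$ (and then still verify \eqref{defom4}), or switch to the additive tangent-line construction.
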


\begin{proof}
   We first claim that $\om$ is of class $C^1$.
   Since $\om$ is concave, for each $t>0$ the left-sided and the right-sided derivatives 
   $\om^{\prime-}(t)$ and $\om^{\prime+}(t)$ exist and satisfy 
   $\om^{\prime+}(t) \le \om^{\prime-}(t)$. Since $\vh = \om \o \exp$ is convex, we 
   also have $\om^{\prime-}(t) \le \om^{\prime+}(t)$. 
   It follows that $\om$ is differentiable. 
   The monotonicity properties imply that $\om$ is even of class $C^1$, indeed if 
   $s \le t$ then $\om'(t) \le \om'(s)$ and $\om'(s)s \le \om'(t)t$ and hence
   \[
      \om'(t) \le \om'(s) \le \om'(t)\frac{t}{s} \to \om'(t) \quad \text{ as } t\to s. 
   \] 

   The condition $\om(t) = o(t)$ as $t \to \infty$ implies that $\om'(t) \searrow 0$ as $t \to \infty$; indeed otherwise 
   $\om'(t)\ge \ep>0$ for all $t$ as $\om'$ is decreasing and hence 
   $\om(t) = \int_0^t \om'(s)\, ds \ge \ep t$, a contradiction.

  \subsection*{Auxiliary sequences}
   By \Cref{prop:integralcondition}, assumption \ref{i:3} implies that 	
   $\om$ and $\si$ satisfy \eqref{strongercondition}. So  
   there exist constants $C>0$, $K>H>1$ and $t_0\ge0$ such that 
   \begin{equation} \label{str}
   	\om(K^jt) \le C H^j \si(t), \quad \text{ for all } t\ge t_0, \, j\ge 1.					
   \end{equation}				
   Note that $\log(t) = o(\si(t))$ and $\si(t) = o(f(t))$ imply $f(t) \to \infty$ as $t \to \infty$.  
   We define inductively three sequences $(x_n)$, $(y_n)$, and $(z_n)$ with $x_1=y_1=z_1 =0$, $x_2> 0$, and the following 
   properties:
   \begin{gather}
         \label{px1}
         x_n > \max\{2,K\}  y_{n-1} + n,
         \\
         \label{px4}
         \min\{t, f(t)\} \ge n^2 \si(t), \quad \text{ for all } t\ge x_n, 
         \\
         \label{px3}
         \om(x_n) \ge 2^{n-i} \om(z_i), \quad 1 \le i \le n-1,
         \\
         \label{px5}
         \si(x_n) \ge 2^{n-i} \si(x_i), \quad 1 \le i \le n-1,
         \\
         \label{py1}
         \om'(y_n) = \frac{n-1}{n} \om'(x_n),
         \\
         \label{pz1}
         \om(z_n) = n \om(y_n) - (n-1) \big(\om(x_n) + (y_n-x_n) \om'(x_n)\big).  
   \end{gather}
   The point $y_n$ is well-defined by \eqref{py1} since $\om'(t) \searrow 0$ as $t \to \infty$. 
   The condition \eqref{pz1} is equivalent to 
   \[
      \om(y_n) - \om(z_n) = (n-1) \big(\om(x_n) + (y_n-x_n) \om'(x_n) - \om(y_n) \big),
   \]
   that means the difference $\om(y_n) - \om(z_n)$ is the $(n-1)$-fold of the distance between the point 
   with the abscissa $y_n$ 
   on the tangent line at $x_n$ to the graph of $\om$ and the point on the graph with the same abscissa. 
   That $z_n \in [x_n,y_n]$ with this property exists follows from the assumption that $\om$ is concave. Indeed, 
   by continuity of $\om$ it suffices to check that    
   \[
      \om(y_n) - \om(x_n) \ge  (n-1) \big(\om(x_n) + (y_n-x_n) \om'(x_n) - \om(y_n) \big)
   \]
   which is equivalent to 
   \[
      \frac{\om(y_n) - \om(x_n)}{y_n-x_n} \ge  \frac{n-1}{n}\om'(x_n)  \stackrel{\eqref{py1}}{=} \om'(y_n)
   \]
   and holds by concavity of $\om$.

   \subsection*{Definition of $\tilde \omega$}
   We define $\tilde \omega$ as follows:
   \[
     \tilde \omega(t) := 
      \begin{cases}
        (n-1)  \big(\om(x_n) + (t-x_n) \om'(x_n) \big) - \sum_{i=1}^{n-2} \om(z_{i+1}) & \text{ if } x_n \le t < y_n,
        \\
        n \om(t) - \sum_{i=1}^{n-1} \om(z_{i+1}) & \text{ if } y_n \le t < x_{n+1}.
      \end{cases} 
   \]
   Then $\tilde \omega$ is continuous (note that continuity at $y_n$ follows from \eqref{pz1})
   and continuously differentiable (which follows from \eqref{py1}).  
   Since $\om$ is increasing we may infer that $\tilde \om$ is increasing. 
   Being a $C^1$-function which piecewise is either affine-linear or concave $\tilde \om$ must be concave. 
   The same reasoning also shows that $t \mapsto \tilde w(e^t)$ is convex, since $\om$ has this property. 

   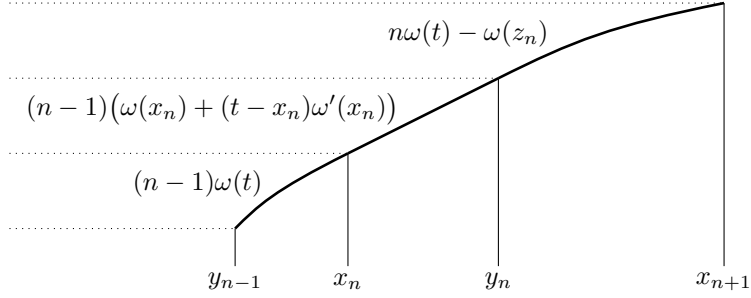
\begin{figure}[ht]
   	
   \begin{tikzpicture}
   	
   	\draw (1.5,0) -- (1.5,-0.5);
   	\draw (3,1) -- (3,-0.5);
   	\draw (5,2) -- (5,-0.5);
   	\draw (8,3) -- (8,-0.5);

   	\draw[dotted] (-1.5,0) -- (1.5,0);
   	\draw[dotted] (-1.5,1) -- (3,1);
   	\draw[dotted] (-1.5,2) -- (5,2);
   	\draw[dotted] (-1.5,3) -- (8,3);

   	\draw[line width=1pt] (3,1) -- (5,2);
   	\draw[line width=1pt] (5,2) .. controls (6,2.5) and (6.5,2.7) .. (8,3);
   	\draw[line width=1pt] (1.5,0) .. controls (1.8,0.3) and (2,0.5) .. (3,1);

   	\node at (1.5,-0.7) {$y_{n-1}$};
   	\node at (3,-0.7) {$x_{n}$};
   	\node at (5,-0.7) {$y_{n}$};
   	\node at (8,-0.7) {$x_{n+1}$};

   	\node at (1,0.6) {$(n-1) \om(t)$};
   	\node at (1.2,1.6) {$(n-1)  \big(\om(x_n) + (t-x_n) \om'(x_n) \big)$};
   	\node at (4.6,2.6) {$n \om(t) - \om(z_{n})$};

   \end{tikzpicture}
   \caption{The graph of $[y_{n-1},x_{n+1}] \ni t \mapsto \tilde \om(t) + \sum_{i=1}^{n-2} \om(z_{i+1})$.}
   \end{figure}

   Next we claim that 
   \begin{align} \label{claim1}
      \tilde \om(t) \ge (n-2) \om(t),   
      \quad \text{ if } t \in [x_n,x_{n+1}) \text{ and } n \ge 2.        
   \end{align}     
   If $x_n \le t \le y_n$ then by concavity of $\om$
   \begin{align*} 
     \tilde \omega(t) 
     &=  (n-1)  \big(\om(x_n) + (t-x_n) \om'(x_n) \big) - \sum_{i=1}^{n-2} \om(z_{i+1})
     \\
     &\ge (n-1) \om(t) - \sum_{i=1}^{n-2} \om(z_{i+1}) 
     \\
     &= \Big((n-1)  - \sum_{i=1}^{n-2} \frac{\om(z_{i+1})}{\om(t)} \Big) \om(t)
     \\
     &\ge (n-2) \om(t), 
   \end{align*}
   since by \eqref{px3}
   \[
      \sum_{i=1}^{n-2} \frac{\om(z_{i+1})}{\om(t)} 
      \le \sum_{i=1}^{n-2} 2^{i+1-n} = 1 - 2^{2-n}.
   \]
   If $y_n \le t < x_{n+1}$ then $\om(t) - \om(z_n) \ge 0$, since $\om$ is increasing and $z_{n} \le y_n$, and 
   thus   
    \begin{align*} 
     \tilde \omega(t) 
     = n \om(t) - \sum_{i=1}^{n-1} \om(z_{i+1}) 
     \ge (n-1) \om(t) - \sum_{i=1}^{n-2} \om(z_{i+1}) 
     \ge (n-2) \om(x)
   \end{align*}
   as in the previous case. Hence \eqref{claim1} is proved.

   Note that \eqref{claim1} shows $\om(t) = o(\tilde \om(t))$  as $t \to \infty$; in particular, $\log(t) = o(\tilde \om(t))$  as $t \to \infty$,
   since $\om$ has this property. 

   On the other hand we have 
   \begin{align} \label{claim2}
      \tilde \om(t) \le n \om(t),   
      \quad \text{ if } t \in [x_n,x_{n+1}) \text{ and } n \ge 2.        
   \end{align}
   This is clear by definition if $y_n \le t < x_{n+1}$. 
   On the interval $[x_n,y_n]$ the graph of $\tilde \om(x)$ is the line segment with endpoints 
   $(x_n,\tilde \om(x_n))$ and $(y_n,\tilde \om(y_n))$. 
   Since $\tilde \om(x_n) = (n-1) \om(x_n) - \sum_{i=1}^{n-2} \om(z_{i+1}) \le (n-1) \om(x_n) \le n \om(x_n)$
   and $\tilde \om(y_n) = n \om(y_n) - \sum_{i=1}^{n-1} \om(z_{i+1}) \le n \om(y_n)$ and 
   $n \om$ is a concave function, we have $\tilde \om(t) \le n \om(t)$ for all $t \in [x_n,y_n]$ as well. 
   Thus \eqref{claim2} is shown.

  \subsection*{Definition of $\tilde \si$}
   We define $\tilde \sigma$ by
   \[
     \tilde \si(t) := n \si(t) - \sum_{i=1}^n \si(x_i), \quad \text{ for  } t \in [x_n,x_{n+1}).  
   \]    
   Then $\tilde \si$ is a continuous increasing functions such that $t \mapsto \tilde \si(e^t)$ is convex.
   We have 
   \begin{align} \label{claim3}
      \tilde \si(t) \ge (n-2) \si(t),   
      \quad \text{ if } t \in [x_n,x_{n+1}) \text{ and } n \ge 2.        
   \end{align}   
   In fact, by \eqref{px5}
  \begin{align*}
       \tilde \si(t) = \Big(n - \sum_{i=1}^n \frac{\si(x_i)}{\si(t)}\Big) \si(t) \ge 
     \Big(n - \sum_{i=1}^n 2^{i-n}\Big) \si(t) \ge (n-2) \si(t).
  \end{align*}
  This shows $\si(t) = o(\tilde \si(t))$  as $t \to \infty$; in particular, 
   $\log(t) = o(\tilde \si(t))$  as $t \to \infty$. 

   By definition we have 
   \begin{align} \label{claim4}
      \tilde \si(t) \le n \si(t),   
      \quad \text{ if } t \in [x_n,x_{n+1}) \text{ and } n \ge 2.        
   \end{align}
   Furthermore,  by \eqref{px4},
  \begin{equation*}
    \tilde \si(t)  
    \le \frac{1}{n} f(t), \quad \text{ for all } t \ge x_n, 
  \end{equation*}
  and hence $\tilde \si(t) = o(f(t))$ as $t \to \infty$, 
  as well as  
  \begin{equation*}
    \tilde \si(t)  
    \le \frac{1}{n} t, \quad \text{ for all } t \ge x_n, 
  \end{equation*}
  which shows $\tilde \si(t) = o(t)$ as $t \to \infty$.

  \subsection*{The pair $(\tilde \om,\tilde \si)$ is $r$-strong for some $r \in (0,1)$}
   Let $N \in \N_{>2}$ be such that $x_N \ge t_0$.
  Let $y \ge x_N$. Suppose that $n \in \N_{\ge N}$ is such that $y \in [x_n,x_{n+1})$. Then  
  $Ky  \in [x_n,x_{n+2})$ since $K x_{n+1} \le x_{n+2}$ by \eqref{px1}.
  By iteration $K^jy \in [x_n,x_{n+j+1}]$.  
  Then  
  \begin{align*}
    \tilde \om(K^j y) &\stackrel{\eqref{claim2}}{\le} (n+j+1) \om(K^j y) 
    \stackrel{\eqref{str}}{\le} (n+j+1)  C H^j \si(y) 
    \\
    &\stackrel{\eqref{claim3}}{\le} \frac{n+j+1}{n-2} C H^j \tilde \si(y)
    \le D j H^j \tilde \si(y)
  \end{align*}
  for all  $y \ge x_N$ and $j \ge 1$ and a constant $D$.   
  Choose $\tilde H \in (H,K)$ and $\tilde N \in \N$ such that 
  \[
    \frac{\log j}{j} \le \log \tilde H - \log H \quad \text{ for all } j \ge \tilde N. 
  \]
  Then $jH^j \le \tilde N \tilde H^j$ for all $j$. 
  It follows that $\tilde \om$ and $\tilde \si$ fulfill \eqref{strongercondition} and thus  
  $(\tilde \om,\tilde \si)$ is $r$-strong for some $r \in (0,1)$, by \Cref{prop:integralcondition}.

  \subsection*{$\tilde \om$ and $\tilde \si$ are weight functions} 
  It remains to show 
  $\tilde \om(2t) = O(\tilde \om(t))$ and $\tilde \si(2t) = O(\tilde \si(t))$ as $t \to \infty$.
  For $\tilde \om$ this follows from the concavity, but it is also a consequence of the following 
  argument which we present for $\tilde \si$. 
  Since $\si(2t) = O(\si(t))$ as $t \to \infty$, there is $N \in \N_{>2}$ and $C\ge 1$ such that 
  \begin{equation} \label{om1}
     \si(2t) \le C \si(t), \quad \text{ for all } t \ge x_N. 
   \end{equation} 
   Fix $t \ge x_N$.
   There is $n \in \N_{\ge N}$ such that $t \in [x_n,x_{n+1})$ and 
   consequently $2t  \in [x_n,x_{n+2})$, by \eqref{px1}. 
  Then  
  \begin{align*}
    \tilde \si(2t) \stackrel{\eqref{claim4}}{\le} (n+2) \si(2t) 
    \stackrel{\eqref{om1}}{\le} (n+2) C \si(t)
    \stackrel{\eqref{claim3}}{\le} \frac{n+2}{n-2} C \tilde \si(t)
    \le C' \tilde \si(t).
  \end{align*}
  Thus  $\tilde \si(2t) = O(\tilde \si(t))$ as $t \to \infty$.
  The proof is complete.
\end{proof}

\begin{remark}
	Note that $\om$ and $\tilde \om$ are non-quasianalytic which follows from \Cref{remark}(4) and \ref{i:3} and \ref{t:3}, respectively. 
	If $\si$ is non-quasianalytic, then also $\tilde \si$ can be chosen non-quasianalytic. 
	It suffices to demand 
	\[
	\int_{x_n}^\infty \frac{\si(t)}{1+t^2} dt \le  \frac{1}{n^3}
	\]
	in the construction of $(x_n)$.  Then 
	\begin{align*}
	    \int_{x_2}^\infty \frac{\tilde \si(t)}{1+t^2}\, dt 
 	   = \sum_{n=2}^\infty \int_{x_n}^{x_{n+1}} \frac{\tilde \si(t)}{1+t^2}\, dt 
  	  \stackrel{\eqref{claim4}}{\le} 
  	  \sum_{n=2}^\infty \int_{x_n}^{x_{n+1}} \frac{n \si(t)}{1+t^2}\, dt
  	  \le 
  	  \sum_{n=2}^\infty  \frac{1}{n^2}< \infty.
	\end{align*}

\end{remark}

\section{Proof of \Cref{Beurling}} \label{sec:proofBeurling}
The proof is based on \Cref{lemma2} and a well-known reduction scheme to the Roumieu case, i.e., \Cref{Roumieu}. 
We follow closely the arguments of \cite[Theorem 4.5]{BBMT91} and need two additional lemmas.

\begin{lemma}[{\cite[p.210]{BMT90}}] \label{C1weight}
  Each weight function $\om$ is equivalent to a weight function 
  $\bar \om$ of class $C^1$.
  If $\bar \vh = \bar \om \o \exp$, 
  then $\bar \vh'(t) \to \infty$ as $t \to \infty$. 
\end{lemma}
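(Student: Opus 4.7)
The plan is to obtain $\bar\om$ by regularizing $\vh = \om\o\exp$, which is convex. By the remark in \Cref{sec:weights} I may first replace $\om$ by an equivalent weight function vanishing on $[0,1]$, so that $\vh(t) = 0$ for $t \le 0$. I would then simply average by
\[
	\bar\vh(t) := \int_0^1 \vh(t+s)\,ds, \qquad t \in \R,
\]
and set $\bar\om(t) := \bar\vh(\log t)$ for $t>0$, $\bar\om(0) := 0$. The function $\bar\vh$ is $C^1$ with $\bar\vh'(t) = \vh(t+1) - \vh(t)$, is non-decreasing because $\vh$ is, and is convex as an average of translates of the convex function $\vh$.

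To establish equivalence of $\bar\om$ and $\om$, note that monotonicity of $\vh$ gives $\vh(t) \le \bar\vh(t) \le \vh(t+1)$. Iterating $\om(2t) = O(\om(t))$ twice yields $\om(4t) = O(\om(t))$, and $e < 4$ together with monotonicity of $\om$ gives $\om(e\cdot e^t) = O(\om(e^t))$, i.e., $\vh(t+1) = O(\vh(t))$ as $t \to \infty$. Hence $\bar\vh \asymp \vh$ for large $t$, so $\bar\om \asymp \om$.

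Next I would verify that $\bar\om$ is a weight function of class $C^1$. For $t \in [0, 1/e]$ we have $\log t \le -1$, and since $\vh$ vanishes on $(-\infty,0]$, $\bar\vh$ vanishes on $(-\infty,-1]$; hence $\bar\om$ vanishes on $[0,1/e]$, which makes the $C^1$ regularity at $0$ automatic. For $t > 1/e$ the smoothness of $\bar\om$ follows from the chain rule, and the derivatives glue continuously because $\bar\vh'$ vanishes at $-1$. The properties $\bar\om(2t) = O(\bar\om(t))$ and $\log t = o(\bar\om(t))$ transfer from $\om$ via the equivalence, continuity and monotonicity are clear, and convexity of $\bar\vh$ holds by construction.

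For the final assertion, $\bar\vh'$ is non-decreasing by convexity. If it were bounded by some $M$, integration would force $\bar\vh(t) \le \bar\vh(0) + Mt$, contradicting $\log t = o(\bar\om(t))$, which under the substitution $t \leftrightarrow e^t$ reads precisely $t = o(\bar\vh(t))$. Hence $\bar\vh'(t) \to \infty$. I do not anticipate a hard step: the only technical care needed is the bookkeeping at the origin, and the normalization $\om|_{[0,1]} = 0$ makes this routine.
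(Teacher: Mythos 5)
Your proof is correct, and it is more self-contained than the paper's. The paper does not construct $\bar\om$ at all: it simply cites \cite[p.~210]{BMT90} for the existence of an equivalent $C^1$ weight function, and then devotes its one-line argument to the second assertion, observing that convexity of $\bar\vh$ together with $\bar\vh(0)=0$ gives $\bar\vh(t)/t\le\bar\vh'(t)$, while $\bar\vh(t)/t\to\infty$ follows from $\log t=o(\bar\om(t))$. Your treatment of that second assertion is essentially the same (you phrase it contrapositively: a bounded derivative would force $\bar\vh(t)=O(t)$), so there is no real difference there. Where you genuinely depart from the paper is in supplying an explicit construction of $\bar\om$ by averaging $\vh$ over a unit interval after normalizing $\om|_{[0,1]}=0$; all the steps check out --- $\bar\vh'(t)=\vh(t+1)-\vh(t)$ is continuous and non-decreasing, $\vh\le\bar\vh\le\vh(\cdot+1)$ combined with the iterated doubling condition gives equivalence, the remaining weight-function axioms transfer through the equivalence, and the vanishing of $\bar\vh'$ at $-1$ handles the gluing at $t=1/e$. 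What your route buys is independence from the external reference (and an explicit formula for $\bar\vh'$); what the paper's route buys is brevity, since \cite{BMT90} already records the regularization.
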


\begin{proof}
  The existence of $\bar \om$ was proved in \cite[p.210]{BMT90}. 
  Since $\bar \om$ is a weight function, $\bar \vh = \bar \om \o \exp$ is convex, $\bar \vh(0)=0$, and 
  $\bar \vh(t)/t \to \infty$ as $t\to \infty$. Convexity implies 
  $\bar \vh(t)/t \le \bar \vh'(t)$ for all $t>0$ and hence the statement follows. 
\end{proof}

\begin{lemma}[{\cite[Lemma 4.3]{BBMT91}}] \label{lem:BBMT43}
    Let $(C_j)_j$ be a positive sequence and 
    let $(\ps_j)_j$ be a sequence of differentiable functions on $[0,\infty)$ 
    such that for all $j$ the following conditions are satisfied:
    \begin{enumerate}[label=\textnormal{($\arabic*$)}]
      \item $\ps_j$ is convex, increasing, and $\ps_j(0)=0$.
      \item $\ps_j'(t) > \ps_{j+1}'(t)$ for all $t>0$. 
      \item $\lim_{t\to \infty} (\ps_j(t) - \ps_{j+1}(t)) = \infty$.
      \item $\lim_{t\to \infty} \ps_j'(t) = \infty$. 
    \end{enumerate}
    Then there exist a positive sequence $(D_j)_j$ and a convex function 
    $h : [0,\infty) \to [0,\infty)$ such that 
    \[
        \inf_{j \ge 1} (\ps_j(t) + C_j) \le h(t) \le  \inf_{j \ge 1} (\ps_j(t) + D_j)
        \quad \text{ for all } t>0. 
    \]
\end{lemma}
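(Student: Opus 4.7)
The plan is to construct $h$ piecewise by following the lower envelope $g(t) := \inf_{k\ge 1}(\psi_k(t)+C_k)$ on ``bulk'' intervals, inserting affine bridges near the points where $g$ fails to be convex, and then choosing $(D_j)$ large enough to absorb the resulting overshoot.

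First I would analyse the structure of $g$: by hypotheses (2) and (3) the differences $\phi_j - \phi_{j+1}$ (where $\phi_j := \psi_j + C_j$) are strictly increasing and tend to $+\infty$, so there is a sequence $0 = \tau_0 < \tau_1 < \tau_2 < \cdots$ of switch points with $\phi_j(\tau_j) = \phi_{j+1}(\tau_j)$ and $g|_{[\tau_{j-1},\tau_j]} = \phi_j$. At each $\tau_j$ the envelope has a concave corner because the left derivative $\psi_j'(\tau_j)$ strictly exceeds the right derivative $\psi_{j+1}'(\tau_j)$.

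Next, for each $j$ I would choose auxiliary points $s_j < \tau_j < t_j$ with $s_j \in [\tau_{j-1},\tau_j]$ and $t_j \in [\tau_j,\tau_{j+1}]$, and define
\[
h(t) := \begin{cases} \phi_j(t), & t \in [t_{j-1},s_j], \\ \phi_j(s_j) + \sigma_j(t - s_j), & t \in [s_j,t_j], \end{cases}
\]
where $\sigma_j := (\phi_{j+1}(t_j)-\phi_j(s_j))/(t_j-s_j)$ is the chord slope. Hypothesis (4) permits me to take $t_j$ large enough that $\psi_j'(s_j) \le \sigma_j \le \psi_{j+1}'(t_j)$, which makes the slope of $h$ non-decreasing across all pieces, hence $h$ is globally convex. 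A short concavity argument on each transition interval---the difference $h-\phi_j$ is concave on $[s_j,\tau_j]$ with non-negative endpoint values, and similarly $h-\phi_{j+1}$ on $[\tau_j,t_j]$---yields $h \ge g$ there, and therefore $h \ge g$ everywhere.

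Finally, for the upper bound $h \le \inf_k(\psi_k + D_k)$, the bulk constraint reads $D_k \ge C_j + \sup_{t\in[t_{j-1},s_j]}(\psi_j(t)-\psi_k(t))$, while on each transition interval an analogous finite bound follows from the chord formula together with the Young conjugate $\psi_k^*(\sigma_j)$. Defining $D_k$ as the supremum over $j$ of these expressions yields the candidate sequence. The main obstacle is verifying that $D_k<\infty$ for every fixed $k$, since the prescribed sequence $(C_j)$ is arbitrary: the constraints coming from $j>k$ involve $C_j$ which may grow without bound. The remedy is to let $(s_j,t_j)$ diverge to infinity fast enough; iterating hypothesis (3) gives $\psi_k(t)-\psi_j(t)\to\infty$ as $t\to\infty$ for every fixed $k<j$, so by pushing $t_{j-1}$ out far enough the gap $\psi_k(t_{j-1})-\psi_j(t_{j-1})$ dominates $C_j$ and keeps the sup finite, while hypothesis (4) (super-linearity of $\psi_j$, hence finiteness of $\psi_k^*$ on $[0,\infty)$) controls the chord overshoot on the transition intervals.
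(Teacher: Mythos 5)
The paper offers no proof of this lemma: it is quoted verbatim from \cite[Lemma 4.3]{BBMT91}, so there is nothing in-house to compare with. Your strategy---follow the lower envelope $g=\inf_j\phi_j$, $\phi_j:=\psi_j+C_j$, on bulk intervals, bridge the concave corners by chords, and enlarge the constants to absorb the overshoot---is the natural one and can be made to work, but as written it has two genuine gaps. The first is at the very start: the asserted switch-point structure of $g$ fails for arbitrary positive $(C_j)$. Take $\psi_j(t)=t^2+t/j$ and $C_j=1/j$; all four hypotheses hold, yet $\phi_j(t)=t^2+(t+1)/j$ is strictly decreasing in $j$ for every $t$, so the infimum is never attained and no $\phi_j$ is ever active. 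You must first normalize, e.g.\ replace $C_j$ by $\max(C_j,j)$ (harmless, since this only increases $\inf_j(\psi_j+C_j)$ and hence strengthens the lower bound to be proved) and pass to the subsequence of active indices.

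The second gap is the one that actually breaks a step. On the transition interval you claim $h-\phi_j$ is concave on $[s_j,\tau_j]$ ``with non-negative endpoint values'', but the value at $\tau_j$ is precisely the assertion that the chord passes above the corner $(\tau_j,g(\tau_j))$, and this does \emph{not} follow from $\psi_j'(s_j)\le\sigma_j\le\psi_{j+1}'(t_j)$. In local coordinates $s_j=0$, $\tau_j=1$, $t_j=2$, one can have $\phi_j(0)=0$, $\phi_j(1)=\phi_{j+1}(1)=10$, $\phi_{j+1}(2)=11$ with $\phi_j'(0)=5$ and $\phi_{j+1}'(2)=5.5$, all consistent with convexity, monotonicity and hypothesis (2); then $\sigma_j=5.5$ satisfies both slope inequalities while the chord has height $5.5<10$ at the corner, so $h<g$ there. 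You must impose the corner condition $\phi_j(s_j)+\sigma_j(\tau_j-s_j)\ge g(\tau_j)$ as an extra requirement on $t_j$; it does hold for all sufficiently large $t_j$ (since $\sigma_j\to\infty$) and is compatible with $\sigma_j\le\psi_{j+1}'(t_j)$ (which holds for every $t_j$ beyond the point where the chord becomes tangent to $\phi_{j+1}$), but taking $t_j$ that large may push it past $\tau_{j+1}$, so the bookkeeping ``$t_j\in[\tau_j,\tau_{j+1}]$'' must be abandoned and further indices skipped. Finally, the finiteness of $D_k$ coming from the transition intervals is only asserted: since an affine function below a convex one at both endpoints can still exceed it in between, you need to bound the interior maximum of the concave function $h-\psi_k$ on $[s_j,t_j]$ uniformly in $j$ for fixed $k$, which again requires pushing $s_j$ out far enough that $\psi_k(s_j)-\psi_j(s_j)\ge C_j$ for all $k<j$; this estimate should be written out rather than attributed to hypothesis (4) alone.
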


\begin{proof}[Proof of \Cref{Beurling}]
By \Cref{remark}(2) it suffices to show the extension result for compact sets. 
So let $K \subseteq \R^n$ be a non-empty compact set and fix a Whitney ultrajet 
$F=(F^\al) \in \cE^{(\si)}(K)$. Consider the sequences 
\begin{align*}
  a_k := \sup_{|\al| = k} \sup_{x \in E} |F^\al(x)|
\end{align*}
and 
\begin{align*}
  b_{k+1} := \sup_{|\al| \le k} \sup_{\substack{x,y \in K\\x \ne y}} |(R^k_x F)^\al(y)| 
  \frac{(k+1-|\al|)!}{|x-y|^{k+1-|\al|}}, \quad b_0:=0.
\end{align*}
Since $F$ is of class $\cE^{(\si)}$ and since the Young conjugate $\ps^*$ 
of $\ps : t \mapsto \si(e^t)$ is increasing, 
for each $j \in \N_{\ge 1}$ there exists $C_j> 1$ such that 
\[
  \max\{a_k,b_k\} \le C_j \exp(j \ps^*(k/j)) \quad \text{ for all } k\in \N. 
\]
Define the function $g : [0,\infty) \to \R$ by setting
\[
    g(t) := \log \max \{a_k,b_k,1\}, \quad \text{ for } k \le t < k+1. 
\]
Then for each $j \in \N_{\ge1}$ there exists $C_j>0$ such that
\[
   g(t) \le j \ps^*(t/j) + C_j \quad \text{ for all } t\ge 0. 
\] 

Let $\ps_j(t) := j \ps^*(t/j)$. We may assume without loss of generality that $\ps$ 
is of class $C^1$ and $\ps'(t) \to \infty$ as $t \to \infty$ (by \Cref{C1weight}), 
hence the Young conjugate $\ps^*$ is differentiable and satisfies 
$(\ps^*)' =(\ps')^{-1}$. 
By \Cref{lem:BBMT43} there exist a convex function $h : [0,\infty) \to [0,\infty)$ 
and a positive sequence $(D_j)$
such that
\begin{equation*}
  g \le h \le \inf_{j \ge 1} (\ps_j + D_j).
\end{equation*}
Then, for each $j \ge 1$ and each $t>0$,
\begin{equation*}
  h^*(t)= \sup_{s\ge 0} \big(st - h(s)\big) \ge  j\sup_{s\ge 0} \big(st/j - \ps^*(s/j)\big) - D_j = j \ps(t) -D_j.    
\end{equation*} 
The function $f(t) := h^*(\max\{0,\log(t)\})$ hence satisfies
\[
  \si(t) = \ps(\log(t)) \le \frac{1}{j} f(t) + \frac{D_j}{j}
\]
for all $j \ge 1$ and all $t \ge 1$.  
Thus $\si(t) = o(f(t))$ as $t \to \infty$.

By \Cref{lemma2}, there exist weight functions $\tilde \om$ and 
$\tilde \si$ with the properties \ref{t:1}--\ref{t:3} and such that  
\begin{equation} \label{eq:inclusions}
  \om(t) = o(\tilde \om(t)),\quad \si(t) = o(\tilde \si(t)),\quad  
  \tilde \si(t) = o(f(t)) \quad \text{ as } t \to \infty.
\end{equation}
By \Cref{lem:stronger}, 
the pair $(\tilde \om,\tilde \si)$ is strong and consequently satisfies the assumptions of \Cref{Roumieu}.

By \eqref{eq:inclusions}, there is a constant $B>0$ such hat $\tilde \si \le f + B$.
  Then we have 
  \[
    \tilde \ps(t) := \tilde \si(e^t) \le f(e^t) + B = h^*(t) + B \quad \text{ for all }
    t\ge 0 
  \]
  and furthermore (since $h$ is convex)
  \[
    g\le h = h^{**} \le \tilde \ps^* + B.
  \]
  This shows that the Whitney ultrajet $F$ belongs to $\cE^{\{\tilde \si\}}(K)$. 
  Then \Cref{Roumieu} implies that  
  $F \in j^\infty_K \cE^{\{\tilde \om\}}(\R^n)$. 
  Since, by \eqref{eq:inclusions}, $\om(t) = o(\tilde \om(t))$ as $t \to \infty$ and hence 
  $\cE^{\{\tilde \om\}}(\R^n) \subseteq \cE^{(\om)}(\R^n)$, \Cref{Beurling} is proved.
\end{proof}

\section{Extension operator} \label{sec:operator}

This section is dedicated to the proof of \Cref{operator}.
It is based on the rather explicit extension procedure available in the Roumieu case; see \cite{Rainer:2019ac,Rainer:2020aa}. 

Let us recall that due to \cite[Theorem 5.14]{MR3285413} we have 
\[
  \cE^{(\om)}(\R^n) = \on{proj}_{U \Subset \R^n} \on{proj}_{x>0} \on{proj}_{a>0} \cE^{W^x}_a(U)
\]
and 
\[
  \cE^{\{\om\}}(\R^n)  =  \on{proj}_{U \Subset \R^n} \on{ind}_{x>0} \on{ind}_{a>0} \cE^{W^x}_a(U)
\]
as locally convex spaces, where 
\[
\cE^{W^x}_a(U)= \Big\{f\in C^\infty(U) : \|f\|^{W^x}_{U,a} :=\sup_{x \in U} \sup_{\al \in \N^n} \frac{|f^{(\al)}(x)|}{a^{|\al|} W^x_{|\al|} } <\infty \Big\}
\]
is a Banach space associated with the weight sequence 
\[
    W^x_{k} := \exp\big( \tfrac{1}{x} \vh^* (k x)\big), \quad k \in \N.
\]
The family $\fW= \{W^x\}_{x>0}$ is called the \emph{weight matrix} associated with $\om$. 

If $K$ is a compact subset of $\R^n$ then we similarly have topological isomorphisms of the jet spaces 
\[
  \cE^{(\om)}(K) = \on{proj}_{x>0} \on{proj}_{a>0} \cE^{W^x}_a(K)
\]
and 
\[
  \cE^{\{\om\}}(K)  = \on{ind}_{x>0} \on{ind}_{a>0} \cE^{W^x}_a(K),
\]
where 
\[
\cE^{W^x}_a(K)= \Big\{F\in \cE(K) : \|F\|^{W^x}_{K,a} + |F|^{W^x}_{K,a}  <\infty \Big\}
\]
and 
\[
  | F |^{W^x}_{K,a}  := \sup_{\substack{x,y \in K\\ x \ne y}} \sup_{p \in \N } \sup_{|\al| \le p}  
  |(R^p_x F)^\al (y)|  \frac{(p+1-|\al|)!}{a^{p+1} W^x_{p+1} |x-y|^{p+1-|\al|} }. 
\]
These results are based on the following observation (see \cite[(5.10)]{MR3285413}): for any weight function $\om$ we have 
\begin{equation}
  \forall a>0 ~\exists H \ge 1 ~\forall x>0 ~\exists C\ge 1 ~\forall k \in \N :
  a^k W^x_k \le C W^{Hx}_k.
\end{equation}
It follows easily that the inclusions $\cE^{W^x}_a(\R^n) \to \cE^{\{\om\}}(\R^n)$ and 
$\cE^{(\om)}(K) \to \cE^{W^x}_a(K)$ are continuous for all $x,a>0$ (cf.\ \cite[Theorem 5.14.1]{MR3285413}).

\begin{proof}[Proof of \Cref{operator}]
	We apply \Cref{lemma2} to $f = \ta$. 
	Then there exist weight functions $\tilde\om$ and $\tilde \si$ satisfying \ref{t:1}--\ref{t:3}  as well as
	$\om(t) = o(\tilde \om(t))$, $\si(t) = o(\tilde \si(t))$, and 
  	$\tilde \si(t) = o(\ta(t))$ as $t \to \infty$.
	
	Let $K$ be a compact subset of $\R^n$.
	Now $\cE^{\{\ta\}}(K)$ is continuously included in $\cE^{(\tilde \si)}(K)$ thanks to $\tilde \si(t) = o(\ta(t))$ as $t \to \infty$,
	and  $\cE^{\{\tilde \om\}}(\R^n)$ is continuously included in $\cE^{(\om)}(\R^n)$ since $\om(t) = o(\tilde \om(t))$ as $t \to \infty$. 
	
	Let $\tilde \fS =\{\tilde S\}$ be the weight matrix associated with $\tilde \si$ and $\tilde \fW = \{\tilde W\}$ the one associated with $\tilde \om$.
	It was proved in \cite[Remark 5.6]{Rainer:2019ac}, \cite{Rainer:2020aa} that for each $\tilde S \in \tilde \fS$ and each $a>0$ there exist 
  	$\tilde W \in \tilde \fW$, $b>0$, and a continuous linear 
	extension operator $\cE^{\tilde S}_a(K) \to \cE^{\tilde W}_b(\R^n)$.   
	Since also the natural inclusions $\cE^{(\tilde \si)}(K) \to \cE^{\tilde S}_a(K)$ and $\cE^{\tilde W}_b (\R^n) \to \cE^{\{\tilde \om\}}(\R^n)$ are continuous, 
    there exists a continuous linear extension operator $\cE^{\{\ta\}}(K) \to \cE^{(\om)}(\R^n)$.

  \[
  \xymatrix{
  \cE^{(\si)}(K) &\cE^{\{\ta\}}(K) \ar@{^{ (}->}[d] \ar@{_{ (}->}[l] \ar@{-->}[rrr] &&&  \cE^{(\om)}(\R^n)
  \\
  &\cE^{(\tilde \si)}(K) \ar@{^{ (}->}[r] \ar@{_{ (}->}[ul] &\cE^{\tilde S}_a(K) \ar[r] & \cE^{\tilde W}_b(\R^n) \ar@{^{ (}->}[r] & \cE^{\{\tilde \om\}}(\R^n) 
  \ar@{^{ (}->}[u]
  }
  \]

  If $A$ is a closed subset of $\R^n$, then we may use a suitable partition of unity as in \Cref{remark}(2) 
  in order to obtain the required extension operator.
\end{proof}


\def\cprime{$'$}
\providecommand{\bysame}{\leavevmode\hbox to3em{\hrulefill}\thinspace}
\providecommand{\MR}{\relax\ifhmode\unskip\space\fi MR }
\providecommand{\MRhref}[2]{%
  \href{http://www.ams.org/mathscinet-getitem?mr=#1}{#2}
}
\providecommand{\href}[2]{#2}

\end{document}